\documentclass[11pt,a4paper]{article}

\usepackage[T1]{fontenc}
\usepackage[utf8]{inputenc}
\usepackage[english]{babel}
\usepackage{amsmath,amssymb,amsfonts}
\usepackage{amsthm}
\usepackage[numbers,sort&compress]{natbib}
\usepackage{hyperref}
\usepackage{tikz}
\usetikzlibrary{arrows.meta,positioning}
\usepackage{float}
\usepackage{xcolor}
\usepackage[a4paper,margin=30mm]{geometry}

\theoremstyle{plain}
\newtheorem{theorem}{Theorem}
\newtheorem{proposition}[theorem]{Proposition}
\newtheorem{lemma}[theorem]{Lemma}
\newtheorem{corollary}[theorem]{Corollary}

\theoremstyle{definition}
\newtheorem{definition}{Definition}
\newtheorem{example}{Example}

\theoremstyle{remark}
\newtheorem{remark}{Remark}

\newcommand{\R}{\mathbb{R}}
\newcommand{\overR}{\overline{\mathbb{R}}}

\newcommand{\dom}{\operatorname{dom}}

\newcommand{\slp}{\operatorname{slp}}

\newcommand{\Lip}{\operatorname{Lip}}
\newcommand{\Aff}{\mathcal{A}}
\newcommand{\LCm}{\mathcal{LC}^{-}}
\newcommand{\NCm}{\mathcal{NC}^{-}}
\newcommand{\Fnc}{\mathcal{F}_{\mathrm{nc}}}

\hypersetup{
  pdftitle={Norm-Cone Conjugation for Nonconvex Optimization: Scalar Duality and Exact Distance Penalization},
  pdfauthor={Fernando Garcia-Castano; Miguel Angel Melguizo-Padial},
  colorlinks=true,
  linkcolor=black,
  citecolor=black,
  urlcolor=black
}

\title{Norm-Cone Conjugation for Nonconvex Optimization:\\
Scalar Duality and Exact Distance Penalization}
\author{
Fernando Garc\'ia-Casta\~no\thanks{Corresponding author: \texttt{fernando.gc@ua.es}. ORCID: 0000-0002-8352-8235}
\and
Miguel \'Angel Melguizo-Padial\thanks{\texttt{ma.mp@ua.es}. ORCID: 0000-0003-0303-791X}
}
\date{}

\begin{document}
\maketitle

\begin{center}
\small Department of Mathematics, University of Alicante,\\
C. de San Vicente del Raspeig, San Vicente del Raspeig, 03690 Alicante, Spain
\end{center}

\begin{abstract}
We develop a norm-cone conjugation framework,
generated by translated norm-cones
\[
x\mapsto r-\alpha\|x-x_0\|,
\qquad \alpha\geq 0.
\]
We show that this family generates the same abstract-convex class as the
family of Lipschitz continuous concave functions,
and exploit its explicit metric structure to obtain a concrete support
geometry, a Fenchel--Moreau-type biconjugation theorem, and an associated
norm-cone subdifferential.

The main optimization consequence is a perturbation-duality theory
based on partial norm-cone conjugation.
Although the partial conjugate depends on a slope and a centre, the centre
can be fixed at the nominal perturbation in the complete biconjugate
expression without changing its value. Consequently, the effective dual
problem is a scalar program whose only dual variable is the real slope
$\alpha\geq0$, even when the primal and perturbation
spaces are vector-valued or infinite-dimensional.
The resulting dual value is the norm-cone biconjugate of the value function
at the nominal perturbation, so that strong duality is characterized by
exact pointwise biconjugation.

For nonconvex conic inequality problems, exact distance penalization is
characterized by norm-cone supportability of the natural value function:
the exact penalty parameters form its norm-cone subdifferential, and the
least exact parameter is the minimal support slope.
Global error bounds, equivalently global metric subregularity,
provide a variational-analytic sufficient condition for exactness
and hence for strong norm-cone duality.

An infinite-dimensional example shows that the results remain
applicable with a nonconvex objective and a nonsolid ordering cone.
\end{abstract}

\noindent\textbf{Keywords:} Nonconvex optimization; norm-cone conjugation; exact penalization; Fenchel-type duality; metric subregularity; generalized convexity.

\medskip
\noindent\textbf{MSC 2020:} 90C26; 90C46; 49N15; 49J52; 52A01.

\section{Introduction}\label{sec1}

Convex duality is one of the central principles of modern optimization and
variational analysis. In the classical Fenchel--Moreau framework, duality is
generated by affine minorants: the conjugate of a function is defined through
continuous linear functionals, and convexity together with lower
semicontinuity ensures exact biconjugation. This representation principle is a
basic tool in convex optimization, perturbation duality and variational
analysis; see, for instance, \cite{Moreau1965,EkelandTemam1976,Rockafellar1974,Zalinescu2002,Bot2010}.
In nonconvex optimization, however, affine supports may fail to capture
the global geometry of the problem, and alternative supporting families may
provide useful dual bounds and exact penalization mechanisms.

Many optimization problems are not naturally convex, and this has motivated
generalized convexity and generalized conjugation schemes that replace the
affine structure of classical Fenchel theory by broader families of elementary
functions. A central instance is abstract convexity, originating in the works
of Kutateladze and Rubinov \cite{KutateladzeRubinov1972,KutateladzeRubinov1976} and developed further in
\cite{Balder1977,DoleckiKurcyusz1978,Rubinov2000,PallaschkeRolewicz1997,Singer2006}. In this setting, an extended-real-valued function is
represented as the upper envelope of minorants selected from a prescribed
family $H$, which may also be described through a coupling function \cite{MartinezLegaz2005}.
From the viewpoint of nonconvex optimization, such global minorants are
particularly relevant when they lead to tractable dual parameterizations or
to quantitative information on exact penalty parameters.

The purpose of the present paper is to exploit a simple metric
elementary family within this generalized-convexity framework in order to
develop a radial duality and exact-penalization scheme for nonconvex
optimization.
Following the approach to abstract convexity based on real-valued Lipschitz
continuous concave functions \cite{Gorokhovik2022}, we consider translated norm-cones
\[
x\mapsto r-\alpha\|x-x_0\|,
\qquad
r\in\mathbb R,\quad \alpha\geq0,\quad x_0\in X.
\]
These functions constitute a geometrically explicit subclass of the
real-valued Lipschitz continuous concave functions. We prove that this
restriction entails no loss at the level of abstract convexity: the functions
representable as upper envelopes of translated norm-cone minorants are
precisely those representable as upper envelopes of Lipschitz continuous
concave minorants. Thus the purpose of the norm-cone family is not to enlarge
the underlying class of abstract convex functions. Its advantage is
structural: the parameterization by centres and slopes provides a concrete
metric support geometry that is not visible at the level of a general
Lipschitz-concave elementary family.

At the foundational level, this metric structure yields a specific
conjugation theory adapted to the subsequent optimization applications.
We introduce the norm-cone conjugate, admissible slopes and heights,
norm-cone supports, the biconjugate, and the associated norm-cone
subdifferential. The biconjugate is shown to be the upper envelope of all
norm-cone minorants. For functions admitting at least one norm-cone minorant,
exact biconjugation is characterized by lower semicontinuity, yielding a
Fenchel--Moreau-type representation beyond classical convexity. The framework
also leads to explicit conjugation formulae for several nonsmooth and
nonconvex functions.

The first main optimization contribution is a perturbation-duality
theory based on partial norm-cone conjugation in the perturbation variable.
The perturbation approach is classical in conjugate duality
\cite{Bot2010,EkelandTemam1976,Rockafellar1974,Zalinescu2002},  but the norm-cone structure produces a substantial reduction
in the effective dual parameterization. Although the partial conjugate
initially depends on a slope and a centre, the centre can be fixed at the
nominal perturbation without changing the dual value. The resulting dual
problem is therefore a scalar program whose only effective dual variable is
the real slope $\alpha\geq0$, even when the primal and perturbation spaces
are vector-valued and possibly infinite-dimensional. Moreover, the optimal
dual value is exactly the norm-cone biconjugate of the value function at the
nominal perturbation. Consequently, strong norm-cone duality is equivalent
to pointwise exact biconjugation of the value function, while uniform lower
Lipschitz estimates provide a direct sufficient condition through norm-cone
supportability.
Thus the dimension of the perturbation space does not enlarge the
effective dual parameterization, although evaluating the scalar dual
objective may still require solving a nonconvex optimization problem.

The second main optimization contribution concerns exact penalization
for nonconvex conic inequality problems
\[
g(x)\in-Z_+.
\]
The natural constraint perturbation may fail to satisfy a uniform lower
Lipschitz estimate, which motivates a distance-penalized perturbation.
Exact distance penalization is characterized by norm-cone supportability of
the natural value function: the exact penalty parameters are precisely the
elements of
\[
\partial^{\sharp}v_\Phi(0_Z),
\]
and the least exact parameter is the minimal norm-cone support slope
\[
|\partial^{\sharp}v_\Phi|(0_Z).
\]
Global error bounds, equivalently global metric subregularity of the
constraint mapping, yield verifiable sufficient conditions for exactness.
Exact penalization then gives strong norm-cone duality for both the direct
and the penalized perturbation schemes.

In particular, the framework does more than provide a zero-duality-gap
criterion: it identifies the complete interval of exact distance-penalty
parameters and its least element through the metric support geometry of the
value function.

The framework is related to gauge duality and polar convolution
\cite{FriedlanderMacedoPong:2014,FriedlanderMacedoPong:2019},
generalized \(c\)-conjugation and evenly convex analysis
\cite{Fajardo:2022evenly,FajardoVidalNunez:2025},
weak conjugacy and duality in nonconvex optimization
\cite{DincYalcinKasimbeyli2020},
and nonlinear Fenchel conjugation
\cite{SchielaHerzogBergmann2024}.
In the weak-conjugacy framework, the elementary functions combine a
continuous linear component with a norm term. The norm-cone construction
studied here isolates the purely metric component of this geometry. As shown
in Section~\ref{sec:norm-cone_conjugation}, on the class
\(\mathcal F_{\rm nc}(X)\) this restriction preserves the resulting
biconjugate, while exposing a radial structure in terms of centres,
admissible slopes, and minimal support slopes. This structure is then used to
develop the distance-penalization results of
Section~\ref{sec:distance_penalization_norm_cone_duality}.

This preservation result concerns the biconjugate and, consequently, the
optimal dual value. It does not identify the first conjugates, the dual
parameterizations, or their sets of optimal solutions, which may differ in
the two frameworks.
Accordingly, the value of the radial specialization in the present
paper lies not in reproducing the full weak-conjugacy parameterization, but
in the scalar dual reduction and in the quantitative interpretation of its
slope as an exact-penalty parameter.

The paper is organized as follows.
Section~\ref{sec:preliminaries} fixes the notation.
Section~\ref{sec:normcone_abstract_convexity} places norm-cone functions
within abstract convexity.
Section~\ref{sec:norm-cone_conjugation} develops norm-cone conjugation and
biconjugation.
Section~\ref{sec:norm_cone_subdifferential} studies the norm-cone
subdifferential.
Section~\ref{sec:dual_norm_cone_optimization} develops the partial norm-cone
perturbation duality theory.
Section~\ref{sec:distance_penalization_norm_cone_duality} applies the theory
to conic inequality problems through distance penalization, error bounds and
metric subregularity.
Section~\ref{sec:conclusions_future_research} collects the main conclusions
and indicates some directions for future research.

\section{Preliminaries and Notation}
\label{sec:preliminaries}

Throughout the paper, \(X\) and \(Z\) denote real normed spaces, both norms
being denoted by \(\|\cdot\|\). The context will always make clear which norm
is meant. For \(x\in X\) and \(\delta>0\), we write
\(B_X(x,\delta):=\{u\in X:\|u-x\|\leq \delta\}\), and use the analogous
notation in \(Z\).

We set
\[
\overline{\mathbb R}:=\mathbb R\cup\{+\infty,-\infty\},
\qquad
\mathbb R_+:=[0,+\infty),
\qquad
t_+:=\max\{t,0\}.
\]
For a set \(A\), we denote its closure and interior by
\(\operatorname{cl}A\) and \(\operatorname{int}A\), respectively, and write
\(-A:=\{-a:a\in A\}\).

For a function \(f:X\to\overline{\mathbb R}\), its effective domain and
epigraph are
\[
\operatorname{dom}f:=\{x\in X:f(x)<+\infty\},
\qquad
\operatorname{epi}f:=\{(x,r)\in X\times\mathbb R:f(x)\leq r\}.
\]
The function \(f\) is proper if \(\operatorname{dom}f\neq\varnothing\) and
\(f(x)>-\infty\) for every \(x\in X\). Unless explicitly stated otherwise,
all extended-real-valued functions considered in the paper are assumed to be
proper. A function originally defined on a nonempty set \(C\subset X\) is
identified with its standard extension to \(X\), obtained by setting
\(f(x)=+\infty\) for \(x\notin C\).

A function \(f:X\to\overline{\mathbb R}\) is lower semicontinuous if
\(\operatorname{epi}f\) is closed. A real-valued function \(f:C\to\mathbb R\), $\varnothing \not =C\subset X$,
is Lipschitz continuous if there exists \(L\geq0\) such that
\(|f(x)-f(y)|\leq L\|x-y\|\) for all \(x,y\in C\), and we set
\[
\operatorname{Lip}(f)
:=
\inf\left\{
L\ge 0:
|f(x)-f(y)|\le L\|x-y\|
\quad \forall\,x,y\in C
\right\}.
\]

Thus \(f\) is Lipschitz continuous if and only if
\(\operatorname{Lip}(f)<+\infty\).

For a nonempty set \(A\subset Z\) and \(z\in Z\), we write
\(d_A(z):=\inf_{a\in A}\|z-a\|\) for the distance from \(z\) to \(A\). We also use
\(\operatorname{dist}(z,A)=d_A(z)\).

A nonempty set \(K\subset Z\) is a cone if
\(\lambda K\subset K\) for every \(\lambda\in\mathbb R_+\).
It is nontrivial if
\(\{0_Z\}\subsetneq K\subsetneq Z\), pointed if
\(K\cap(-K)=\{0_Z\}\), and solid if \(\operatorname{int}K\neq\varnothing\).
Unless explicitly stated otherwise, all cones considered below are assumed
to be nontrivial.

We use the standard order and arithmetic conventions on
\(\overline{\mathbb R}\), with
\(\sup\varnothing=-\infty\) and \(\inf\varnothing=+\infty\).
Indeterminate expressions such as \(+\infty-\infty\) are never used.

\section{Norm-cone Functions within Abstract Convexity}
\label{sec:normcone_abstract_convexity}
This section places the norm-cone family within the abstract-convexity
framework needed later in the paper. We recall only the notions required
from the \(\LCm\)-theory of Gorokhovik \cite{Gorokhovik2022} and then show
that the family of norm-cone functions generates the same class of abstract
convex functions as \(\LCm\).

We begin by recalling the relevant notions from \cite{Gorokhovik2022}. Let
\(H\) be a family of functions \(h:X\to\R\), and let \(f:X\to\overR\).
The elements of \(S^{-}(H,f):=\{h\in H:h\le f\}\) are called
\(H\)-minorants of \(f\).

The function \(f\) is said to be \(H\)-convex if
\[
S^{-}(H,f)\neq\varnothing
\]
and
\[
f(x)=\sup\{h(x):h\in S^{-}(H,f)\}
\qquad \forall x\in X.
\]
Following \cite{Gorokhovik2022}, let \(\LCm:=\LCm(X,\R)\) denote the class
of all real-valued Lipschitz continuous concave functions on \(X\). We shall
use the following characterization proved in
\cite[Theorem~3.2]{Gorokhovik2022}: a function
\(f:X\to\overline{\mathbb R}\) is \(\LCm\)-convex if and only if it is
lower semicontinuous and bounded from below by a Lipschitz continuous
function.

We now introduce the special family of elementary functions that will generate the norm-cone conjugation developed in the subsequent sections.

\begin{definition}
\label{def:normcone_functions}
Let \((X,\|\cdot\|)\) be a normed space.

For each \(\alpha\in\R_+\), define
\(\phi_\alpha(x):=-\alpha\|x\|\), \(x\in X\).

A function \(h:X\to\R\) is called a \emph{norm-cone function} if there exist \(r\in\R\), \(x_0\in X\), and \(\alpha\in\R_+\) such that
\[
h(x)
=
r+\phi_\alpha(x-x_0)
=
r-\alpha\|x-x_0\|
\qquad \forall x\in X.
\]

The collection of all norm-cone functions on \(X\) is denoted by
\(\NCm:=\NCm(X,\R)\).
\end{definition}

\begin{remark}
\label{rem:normcone_subset_lcm}
Every norm-cone function \(h\in\NCm\) is concave and Lipschitz continuous. More precisely, if
\(
h(x)=r-\alpha\|x-x_0\|,
\)
then
\(
\Lip(h)=\alpha.
\)
Consequently, \(\NCm\subset\LCm\).
\end{remark}

The next result shows that the family \(\NCm\) generates, through upper envelopes, exactly the same class of functions as \(\LCm\). Thus, from the point of view of abstract convexity, no generality is lost by replacing the whole class of Lipschitz continuous concave functions by the much smaller and geometrically explicit family of norm-cone functions.

\begin{theorem}
\label{thm:normcone_lcm_equivalence}
For a function \(f:X\to\overR\), the following assertions are equivalent:
\begin{enumerate}
\item \(f\) is \(\NCm\)-convex;

\item \(f\) is \(\LCm\)-convex;

\item \(f\) is lower semicontinuous and bounded from below by a norm-cone function.
\end{enumerate}
\end{theorem}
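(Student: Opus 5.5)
We will prove the cycle of implications (1)$\Rightarrow$(2)$\Rightarrow$(3)$\Rightarrow$(1), using the characterization of \(\LCm\)-convexity from \cite[Theorem~3.2]{Gorokhovik2022} as a black box for the middle step.

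For (1)$\Rightarrow$(2), we use Remark~\ref{rem:normcone_subset_lcm}. Since \(\NCm\subset\LCm\), we have \(S^{-}(\NCm,f)\subset S^{-}(\LCm,f)\), and this set is nonempty. Every \(\LCm\)-minorant is \(\le f\). Hence
\[
f=\sup S^{-}(\NCm,f)\le \sup S^{-}(\LCm,f)\le f,
\]
so \(f\) is \(\LCm\)-convex.

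For (2)$\Rightarrow$(3), Gorokhovik's theorem gives that \(f\) is lower semicontinuous and has a Lipschitz minorant \(g\) with constant \(L\). Fix any \(x_0\in X\). Then
\[
g(x)\ge g(x_0)-L\|x-x_0\|,
\]
so \(h(x)=g(x_0)-L\|x-x_0\|\) is a norm-cone minorant of \(f\).

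For (3)$\Rightarrow$(1), the main step is to show that every lower semicontinuous \(f\) with a norm-cone minorant \(h_0=r_0-\alpha_0\|\cdot-x_1\|\) is the supremum of its norm-cone minorants. Fix \(\bar x\in X\) and a real \(t<f(\bar x)\). We may assume \(t\ge h_0(\bar x)\); otherwise \(h_0\) already does the job. By lower semicontinuity there is \(\delta>0\) with \(f(x)>t\) on \(B_X(\bar x,\delta)\). We then take
\[
h(x)=\max\{\text{nothing}\}\ \text{ replaced by }\ h(x):=t-\alpha\|x-\bar x\|,
\]
that is, \(h(x)=t-\alpha\|x-\bar x\|\), and choose \(\alpha\) large enough that \(h\le f\).

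On the ball, \(h\le t<f\) holds automatically. Off the ball, where \(\|x-\bar x\|>\delta\), it suffices to have \(h\le h_0\), i.e.
\[
t-\alpha\|x-\bar x\|\le r_0-\alpha_0\|x-x_1\|.
\]
The triangle inequality gives \(\|x-x_1\|\le\|x-\bar x\|+\|\bar x-x_1\|\). So it is enough that
\[
(\alpha-\alpha_0)\|x-\bar x\|\ge t-r_0+\alpha_0\|\bar x-x_1\|
\]
whenever \(\|x-\bar x\|>\delta\). This holds for
\[
\alpha\ge\alpha_0+\frac{\bigl(t-r_0+\alpha_0\|\bar x-x_1\|\bigr)_+}{\delta}.
\]
The resulting \(h\) is a norm-cone minorant with \(h(\bar x)=t\). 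Letting \(t\uparrow f(\bar x)\) (or \(t\to+\infty\) if \(f(\bar x)=+\infty\)) gives \(\sup S^{-}(\NCm,f)(\bar x)=f(\bar x)\).

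The main obstacle is this last construction: the single translated cone must stay below \(f\) both locally, which is handled by lower semicontinuity, and globally, which needs the given norm-cone minorant together with the triangle-inequality estimate. No convexity of \(f\) is required anywhere, because the cone is centred at \(\bar x\).
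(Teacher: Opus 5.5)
Your proof is correct. The implications (1)$\Rightarrow$(2) and (2)$\Rightarrow$(3) are exactly the paper's, but for (3)$\Rightarrow$(1) you take a genuinely different route.

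The paper invokes \cite[Theorem~3.2]{Gorokhovik2022} a second time to conclude that $f$ is $\LCm$-convex. It then shows that each $g\in S^{-}(\LCm,f)$ is itself the upper envelope of the cones $g(y)-\Lip(g)\|\cdot-y\|$. The reduction therefore happens at the level of minorants and rests on the nontrivial (sufficiency) half of Gorokhovik's characterization.

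You instead construct, for each $\bar x$ and each real $t<f(\bar x)$, a single cone $t-\alpha\|\cdot-\bar x\|$ lying below $f$. Lower semicontinuity handles a ball around $\bar x$, and the given minorant $h_0$ together with the triangle inequality handles its complement. Note that $t-r_0+\alpha_0\|\bar x-x_1\|=t-h_0(\bar x)\ge 0$ under your reduction, so the positive part is superfluous.

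Your argument is self-contained and in fact reproves the nontrivial half of Gorokhovik's theorem, since $\NCm\subset\LCm$. Moreover, (2)$\Rightarrow$(3) only needs two elementary facts: a supremum of continuous functions is lower semicontinuous, and any member of $S^{-}(\LCm,f)$ is already a Lipschitz minorant. Hence your approach makes the whole theorem independent of \cite{Gorokhovik2022}. It also shows directly that the approximating cones can be taken with vertex above the evaluation point, which anticipates Proposition~\ref{prop:fixed_center_biconjugate}. The paper's route, for its part, gives the slightly stronger structural fact that every $\LCm$-minorant of $f$ is itself an upper envelope of $\NCm$-minorants.

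Finally, remove the leftover draft fragment in the sentence introducing $h$ (the one containing ``$\max\{\text{nothing}\}$''); the intended definition $h(x)=t-\alpha\|x-\bar x\|$ is clear from the next line.
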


\begin{proof}
\(1)\Rightarrow 2)\).
Since \(\NCm\subset\LCm\), one has
\(
S^{-}(\NCm,f)\subset S^{-}(\LCm,f).
\)
Hence
\[
f(x)
=
\sup\{h(x):h\in S^{-}(\NCm,f)\}
\le
\sup\{h(x):h\in S^{-}(\LCm,f)\}
\le
f(x),
\]
which proves that \(f\) is \(\LCm\)-convex.

\medskip

\(2)\Rightarrow 3)\).
By \cite[Theorem~3.2]{Gorokhovik2022}, \(f\) is lower semicontinuous and bounded from below by a Lipschitz continuous function \(g:X\to\R\).
Fix \(x_0\in X\) and set \(L:=\Lip(g)\). Then
\[
g(x)\ge g(x_0)-L\|x-x_0\|
\qquad \forall x\in X.
\]
Therefore, the norm-cone function
\(
h(x):=g(x_0)-L\|x-x_0\|
\)
satisfies
\(
h\le g\le f.
\)

\medskip

\(3)\Rightarrow 1)\).
Assume that \(f\) is lower semicontinuous and bounded from below by a
norm-cone function \(h_0\in\NCm\). Since \(h_0\in\LCm\),  \cite[Theorem~3.2]{Gorokhovik2022}
implies that \(f\) is \(\LCm\)-convex. Hence
\[
f(x)
=
\sup\{g(x):g\in S^{-}(\LCm,f)\},
\qquad \forall x\in X.
\]

We show that every \(g\in S^{-}(\LCm,f)\) is itself the upper envelope of
norm-cone minorants. Let \(g\in S^{-}(\LCm,f)\) and let \(L:=\Lip(g)\). For each
\(y\in X\), define
\[
h_y(x):=g(y)-L\|x-y\|,
\qquad x\in X.
\]
Then \(h_y\in\NCm\). Moreover, since \(g\) is \(L\)-Lipschitz,
\[
g(x)\ge g(y)-L\|x-y\|=h_y(x),
\qquad \forall x\in X.
\]
Thus \(h_y\in S^{-}(\NCm,g)\subset S^{-}(\NCm,f)\). On the other hand,
\(h_x(x)=g(x)\). Therefore
\[
g(x)
=
\sup_{y\in X} h_y(x)
\le
\sup\{h(x):h\in S^{-}(\NCm,f)\},
\qquad \forall x\in X.
\]

Taking the supremum over all \(g\in S^{-}(\LCm,f)\), we get
\[
f(x)
\le
\sup\{h(x):h\in S^{-}(\NCm,f)\},
\qquad \forall x\in X.
\]
The reverse inequality follows from \(S^{-}(\NCm,f)\subset S^{-}(\LCm,f)\).
Hence
\[
f(x)
=
\sup\{h(x):h\in S^{-}(\NCm,f)\},
\qquad \forall x\in X,
\]
and consequently \(f\) is \(\NCm\)-convex.
\end{proof}

\begin{remark}\label{rem:fenchel_as_abstract_convexity}
The previous theorem places norm-cone convexity within the general framework of
abstract convexity generated by families of support functions.

Recall that the classical Fenchel theory is generated by the family
\[
\Aff(X,\R)
\]
of continuous affine functions on $X$. In this setting, lower semicontinuous convex
functions are precisely the abstract convex functions generated by
$\Aff(X,\R)$.

Since every continuous affine function is globally Lipschitz and both convex and
concave, one has
\[
\Aff(X,\R)
\subset
\LCm(X,\R).
\]
Consequently, the class of $\LCm$-convex functions --- and therefore, by
Theorem~\ref{thm:normcone_lcm_equivalence}, the class of $\NCm$-convex
functions --- extends the classical Fenchel framework beyond convexity.
\end{remark}

\section{Norm-cone Conjugation: Definition and Properties}
\label{sec:norm-cone_conjugation}
We now develop the conjugation scheme generated by norm-cone minorants.
After introducing the norm-cone conjugate and biconjugate, admissible
slopes, and the class \(\mathcal F_{\rm nc}(X)\), we interpret the
conjugate through admissible heights and norm-cone supports. We then
derive a geometric representation of the biconjugate as the upper
envelope of all norm-cone minorants and obtain the corresponding
pointwise and global exactness results. The section concludes with
several examples illustrating explicit norm-cone conjugation formulae
for nonsmooth and nonconvex functions.

\subsection{Norm-cone Conjugation and Admissible Slopes}

\begin{definition}\label{def:metric_conjugates}
Let $X$ be a normed space and let $f:X\to\overR$ be a proper function.

\begin{enumerate}
\item The norm-cone conjugate function is
\[
f^\sharp:\R_+\times X\to\overR,
\qquad
f^\sharp(\alpha,x):=
\sup_{x'\in X}
\bigl(
\phi_\alpha(x'-x)-f(x')
\bigr).
\]
\item The norm-cone biconjugate function is
\[
f^{\sharp\sharp}:X\to\overR,
\qquad
f^{\sharp\sharp}(x):=
\sup_{(\alpha,x')\in\R_+\times X}
\bigl(
\phi_\alpha(x-x')-f^\sharp(\alpha,x')
\bigr).
\]
\end{enumerate}
\end{definition}

We now introduce the class of slopes compatible with the asymptotic behaviour of a
function relative to the family of norm-cone functions.

\begin{definition}\label{def:set_conjugate}
Let \(X\) be a normed space and let
\(f:X\to\overR\)
be a proper function. The set of \emph{admissible slopes} of \(f\) is defined by
\[
\slp(f):=
\Bigl\{
\alpha\ge0:
\sup_{x\in X}
\bigl(\phi_\alpha(x)-f(x)\bigr)
<+\infty
\Bigr\}.
\]
\end{definition}

Admissible slopes characterize the functions that admit a global minorization by a translate of a norm-cone function.

\begin{definition} \label{def:norm_cone_minorized_functions}
Let \(X\) be a normed space. We denote by
\[
\mathcal{F}_{\mathrm{nc}}(X)
\]
the class of all proper functions
\(f:X\to\overR\)
such that
\(
\slp(f)\neq\varnothing.
\)
Functions in \(\mathcal{F}_{\mathrm{nc}}(X)\) are called
\emph{norm-cone minorized functions}.
\end{definition}

The finiteness of the conjugate for a fixed slope does not depend on the
choice of centre. Indeed, for every \(\alpha\geq0\) and \(x\in X\), the
triangle inequality gives

\[
\phi_\alpha(z-x)
\leq
\phi_\alpha(z)+\alpha\|x\|,
\qquad
\phi_\alpha(z)
\leq
\phi_\alpha(z-x)+\alpha\|x\|,
\]

and therefore

\[
\bigl|
f^\sharp(\alpha,x)-f^\sharp(\alpha,0_X)
\bigr|
\leq
\alpha\|x\|,
\]

whenever one, and hence both, of these quantities is finite.
Consequently, for every \(\alpha\geq0\), the following conditions are
equivalent:

\[
\alpha\in\operatorname{slp}(f),
\qquad
f^\sharp(\alpha,x)<+\infty
\ \text{for some }x\in X,
\qquad
f^\sharp(\alpha,x)<+\infty
\ \text{for every }x\in X.
\]

In particular, if \(\alpha\notin\operatorname{slp}(f)\), then

\[
f^\sharp(\alpha,x)=+\infty,
\qquad x\in X.
\]

The next result characterizes the class \(\mathcal F_{\rm nc}(X)\) through the existence of global norm-cone minorants, or equivalently, global lower bounds of norm-cone type.

\begin{proposition}\label{prop:nc_minorized}
Let \(X\) be a normed space and let \(f:X\to\overR\) be proper. The following
assertions are equivalent:
\begin{enumerate}
\item \(f\in\Fnc(X)\);
\item \(S^{-}(\NCm,f)\neq\varnothing\);
\item there exist \(r\in\R\) and \(\alpha\ge0\) such that
\[
f(x)\ge r-\alpha\|x\|,
\qquad x\in X.
\]
\end{enumerate}
\end{proposition}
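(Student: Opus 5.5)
I will prove the cycle of implications \(1)\Rightarrow 3)\Rightarrow 2)\Rightarrow 1)\). All three steps are direct manipulations of the definitions together with the triangle inequality. The only point needing care is moving the centre of a norm-cone function to the origin, which is the same estimate used above for \(|f^\sharp(\alpha,x)-f^\sharp(\alpha,0_X)|\le\alpha\|x\|\).

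For \(1)\Rightarrow 3)\), pick \(\alpha\in\slp(f)\) and set
\[
c:=\sup_{x\in X}\bigl(\phi_\alpha(x)-f(x)\bigr).
\]
By definition of \(\slp(f)\) we have \(c<+\infty\). Since \(f\) is proper, some \(x\in\dom f\) gives a finite term \(\phi_\alpha(x)-f(x)\), so \(c>-\infty\) and \(c\in\R\). Then \(-\alpha\|x\|-f(x)\le c\) for every \(x\), that is, \(f(x)\ge -c-\alpha\|x\|\). At points where \(f(x)=+\infty\) the inequality is trivial. This gives 3) with \(r:=-c\).

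For \(3)\Rightarrow 2)\), the function \(h(x):=r-\alpha\|x-0_X\|\) is a norm-cone function with centre \(x_0=0_X\). By hypothesis \(h\le f\), so \(h\in S^{-}(\NCm,f)\).

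For \(2)\Rightarrow 1)\), let \(h(x)=r-\alpha\|x-x_0\|\le f(x)\) for all \(x\). The triangle inequality gives \(\|x-x_0\|\le\|x\|+\|x_0\|\), hence
\[
f(x)\ge r-\alpha\|x_0\|-\alpha\|x\|\quad\text{for all } x\in X.
\]
It follows that \(\phi_\alpha(x)-f(x)\le \alpha\|x_0\|-r\) for every \(x\), so the supremum in the definition of \(\slp(f)\) is finite. Thus \(\alpha\in\slp(f)\ne\varnothing\) and \(f\in\Fnc(X)\).

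The step needing the most attention is \(1)\Rightarrow 3)\): one must check that the supremum defining \(\slp(f)\) is a real number and not \(-\infty\), which is exactly where properness of \(f\) (nonempty domain) is used. There is no substantive obstacle beyond this bookkeeping.
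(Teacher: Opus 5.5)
Your proof is correct and follows essentially the paper's route: the supremum defining \(\slp(f)\) yields the bound for 1)\(\Rightarrow\)3), 3)\(\Rightarrow\)2) is immediate, and the last step uses the same triangle-inequality recentering \(\|x-x_0\|\le\|x\|+\|x_0\|\). The one difference is that your last step concludes \(\alpha\in\slp(f)\) directly and so closes the cycle, whereas the paper's last step only reaches 3) and leaves the trivial implication 3)\(\Rightarrow\)1) implicit; your use of properness to show the supremum is not \(-\infty\) is a further detail the paper does not spell out.
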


\begin{proof}
\(1)\Rightarrow3)\).
Choose \(\alpha\in\slp(f)\) and set
\[
M:=\sup_{x\in X}\bigl(\phi_\alpha(x)-f(x)\bigr)<+\infty.
\]
Then
\[
f(x)\ge -M+\phi_\alpha(x)
      =-M-\alpha\|x\|,
\qquad x\in X.
\]

\(3)\Rightarrow2)\) is immediate, since
\(x\mapsto r-\alpha\|x\|\) belongs to \(\NCm\) and is a minorant of \(f\).

Finally, assume \(2)\). Then for some \(r_0\in\R\), \(\alpha\ge0\), and
\(x_0\in X\),
\[
r_0-\alpha\|x-x_0\|\le f(x),
\qquad x\in X.
\]
Since
\[
-\alpha\|x-x_0\|
\ge -\alpha\|x\|-\alpha\|x_0\|,
\]
we obtain
\[
f(x)\ge
\bigl(r_0-\alpha\|x_0\|\bigr)-\alpha\|x\|,
\qquad x\in X.
\]
Thus \(3)\) holds.
\end{proof}

Thus \(\Fnc(X)\) records only the existence of a norm-cone minorant, whereas
\(\NCm\)-convexity requires the exact recovery of the function as the upper
envelope of all its norm-cone minorants.

\begin{example}\label{ex:funciones_norm_cone_minorized}
The class \(\Fnc(X)\) contains all proper functions bounded from below and all
proper Lipschitz continuous functions. Moreover, every proper lower
semicontinuous convex function belongs to \(\Fnc(X)\). In contrast,
\(g(x)=x^{2n+1}\notin\Fnc(\R)\) for every \(n\in\mathbb N\).
\end{example}

\begin{proof}
Only the convex case requires an argument. Let
\(f:X\to\overR\) be proper, lower semicontinuous and convex, and fix
\(x_0\in\dom f\). There exists \(\delta>0\) such that
\[
f(u)\ge f(x_0)-1
\qquad\text{whenever }\|u-x_0\|\le\delta.
\]
For \(x\in\dom f\), \(x\neq x_0\), set
\[
\lambda_x:=
\min\left\{1,\frac{\delta}{2\|x-x_0\|}\right\}.
\]
Convexity applied to
\(u_x=(1-\lambda_x)x_0+\lambda_x x\) gives
\[
f(x)\ge f(x_0)-\frac1{\lambda_x}.
\]
Since
\[
\frac1{\lambda_x}
\le
1+\frac{2}{\delta}\|x-x_0\|
\le
1+\frac{2}{\delta}\|x_0\|
+\frac{2}{\delta}\|x\|,
\]
we obtain
\[
f(x)\ge
f(x_0)-1-\frac{2}{\delta}\|x_0\|
-\frac{2}{\delta}\|x\|,
\qquad x\in X.
\]
Hence \(f\in\Fnc(X)\) by Proposition~\ref{prop:nc_minorized}. The remaining positive assertions are immediate from the definition. Finally,
for every \(\alpha\geq0\),

\[
x^{2n+1}+\alpha|x|
\longrightarrow -\infty,
\qquad\text{as }x\to-\infty.
\]

Hence there is no \(r\in\mathbb R\) and \(\alpha\geq0\) such that

\[
x^{2n+1}\geq r-\alpha|x|,
\qquad x\in\mathbb R.
\]

Therefore \(x^{2n+1}\notin\mathcal F_{\rm nc}(\mathbb R)\) by
Proposition~\ref{prop:nc_minorized}.

\end{proof}

The norm-cone conjugation enjoys several elementary properties analogous to those
of the classical Fenchel conjugation.

\begin{proposition}\label{prop:basic_properties_metric}
Let \(X\) be a normed space and let
\(f,g\in\mathcal{F}_{\mathrm{nc}}(X)\). Then:

\begin{enumerate}
\item For every \(\alpha\in\slp(f)\), the function
\(x\longrightarrow f^\sharp(\alpha,x)\) is \(\alpha\)-Lipschitz on \(X\).

\item \(f^{\sharp\sharp}\le f\) on \(X\).

\item If \(f\le g\), then
\[
\slp(f)\subset\slp(g),
\qquad
f^\sharp\ge g^\sharp,
\qquad
f^{\sharp\sharp}\le g^{\sharp\sharp}.
\]

\item For every \(c\in\mathbb R\),
\[
\slp(f+c)=\slp(f),
\qquad
(f+c)^\sharp=f^\sharp-c,
\qquad
(f+c)^{\sharp\sharp}=f^{\sharp\sharp}+c.
\]
\end{enumerate}
\end{proposition}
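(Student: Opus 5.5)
We prove the four items in order, each directly from Definition~\ref{def:metric_conjugates}, using only the triangle inequality and the conventions on \(\overR\).

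\emph{Item 1.} Fix \(\alpha\in\slp(f)\). By the discussion preceding Proposition~\ref{prop:nc_minorized}, \(f^\sharp(\alpha,x)<+\infty\) for every \(x\in X\). Since \(f\) is proper, it is finite at some point, so also \(f^\sharp(\alpha,x)>-\infty\). Hence \(x\mapsto f^\sharp(\alpha,x)\) is real-valued. For \(x,y\in X\), the triangle inequality gives
\[
\phi_\alpha(x'-x)\le\phi_\alpha(x'-y)+\alpha\|x-y\|
\qquad\text{for all } x'\in X.
\]
Subtracting \(f(x')\) and taking the supremum over \(x'\) yields \(f^\sharp(\alpha,x)\le f^\sharp(\alpha,y)+\alpha\|x-y\|\). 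Exchanging \(x\) and \(y\) gives the \(\alpha\)-Lipschitz property.

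\emph{Item 2.} This is the Fenchel--Young inequality. For \((\alpha,x')\) with \(f^\sharp(\alpha,x')=+\infty\), the term \(\phi_\alpha(x-x')-f^\sharp(\alpha,x')\) equals \(-\infty\) and does not contribute. Otherwise \(f^\sharp(\alpha,x')\) is finite, and the definition with the choice \(x\) gives \(f^\sharp(\alpha,x')\ge\phi_\alpha(x-x')-f(x)\), using the symmetry \(\|x-x'\|=\|x'-x\|\). When \(f(x)\) is finite, this rearranges to \(\phi_\alpha(x-x')-f^\sharp(\alpha,x')\le f(x)\). When \(f(x)=+\infty\) the inequality is trivial. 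Taking the supremum gives \(f^{\sharp\sharp}\le f\).

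\emph{Item 3.} Suppose \(f\le g\). Then \(\phi_\alpha(x)-g(x)\le\phi_\alpha(x)-f(x)\) pointwise, so \(\sup_x(\phi_\alpha-g)<+\infty\) whenever \(\sup_x(\phi_\alpha-f)<+\infty\). This gives \(\slp(f)\subset\slp(g)\). The same pointwise comparison inside the supremum defining the conjugate gives \(f^\sharp\ge g^\sharp\). Reversing the sign once more gives \(\phi_\alpha(x-x')-f^\sharp(\alpha,x')\le\phi_\alpha(x-x')-g^\sharp(\alpha,x')\), where the convention \(-(+\infty)=-\infty\) handles infinite values. Taking suprema yields \(f^{\sharp\sharp}\le g^{\sharp\sharp}\).

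\emph{Item 4.} Constants pass through suprema: \(\sup_x(\phi_\alpha(x)-f(x)-c)=\sup_x(\phi_\alpha(x)-f(x))-c\). This gives \(\slp(f+c)=\slp(f)\) and \((f+c)^\sharp=f^\sharp-c\), with \(+\infty-c=+\infty\). Substituting into the biconjugate, \(\phi_\alpha(x-x')-(f^\sharp(\alpha,x')-c)=\phi_\alpha(x-x')-f^\sharp(\alpha,x')+c\), so \((f+c)^{\sharp\sharp}=f^{\sharp\sharp}+c\).

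No step is a genuine obstacle. The only care needed is bookkeeping of infinite values: checking that no \(+\infty-\infty\) arises. This is ensured by properness of \(f\) (so \(f^\sharp>-\infty\)) and by discarding pairs \((\alpha,x')\) with \(f^\sharp(\alpha,x')=+\infty\).
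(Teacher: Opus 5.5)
Your proof is correct and follows essentially the same route as the paper: the triangle inequality followed by a supremum for (1), evaluating the defining supremum of $f^\sharp(\alpha,x')$ at the point $x$ for (2), pointwise monotonicity for (3), and pulling constants through suprema for (4). Your explicit bookkeeping of infinite values is a useful refinement that the paper leaves implicit: you note that $f^\sharp(\alpha,\cdot)$ is real-valued for $\alpha\in\slp(f)$ by properness, and you discard pairs with $f^\sharp(\alpha,x')=+\infty$.
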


\begin{proof}
For \(x,y,z\in X\), the triangle inequality gives
\[
\phi_\alpha(z-x)
\le
\phi_\alpha(z-y)+\alpha\|x-y\|.
\]
Taking the supremum over \(z\) yields
\[
f^\sharp(\alpha,x)
\le
f^\sharp(\alpha,y)+\alpha\|x-y\|,
\]
and exchanging \(x\) and \(y\) proves (1).

For (2), the definition of \(f^\sharp\) gives, for every
\((\alpha,x')\in\R_+\times X\),
\[
f^\sharp(\alpha,x')
\ge
\phi_\alpha(x-x')-f(x),
\]
hence
\[
\phi_\alpha(x-x')-f^\sharp(\alpha,x')\le f(x).
\]
Taking the supremum over \((\alpha,x')\) gives
\(f^{\sharp\sharp}\le f\).

If \(f\le g\), then directly from the definitions
\[
\slp(f)\subset\slp(g)
\qquad\text{and}\qquad
f^\sharp\ge g^\sharp,
\]
which in turn gives \(f^{\sharp\sharp}\le g^{\sharp\sharp}\). This proves (3).
Finally, (4) follows immediately from the definitions.
\end{proof}

We close this subsection with the basic inequality associated with the
norm-cone conjugation. It is the analogue of the Fenchel--Young inequality
in the present setting.

\begin{proposition}
\label{prop:fenchel_young_metric}
Let \(X\) be a normed space and let \(f\in\mathcal F_{\mathrm{nc}}(X)\). Then
\[
-f^\sharp(\alpha,x)+\phi_\alpha(z-x)\le f(z),
\qquad
\forall (\alpha,x)\in\mathbb R_+\times X,\ \forall z\in X.
\]
Equivalently,
\[
f^\sharp(\alpha,x)+f(z)-\phi_\alpha(z-x)\ge0,
\qquad
\forall (\alpha,x)\in\mathbb R_+\times X,\ \forall z\in X.
\]

\end{proposition}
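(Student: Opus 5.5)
The Fenchel--Young-type inequality should follow directly from the definition of the norm-cone conjugate in Definition~\ref{def:metric_conjugates}, by the same one-line argument used for item~(2) of Proposition~\ref{prop:basic_properties_metric}. Fix \((\alpha,x)\in\R_+\times X\) and \(z\in X\). Since \(f^\sharp(\alpha,x)\) is a supremum over \(x'\in X\) of \(\phi_\alpha(x'-x)-f(x')\), choosing \(x'=z\) gives
\[
f^\sharp(\alpha,x)\ge \phi_\alpha(z-x)-f(z).
\]
Rearranging this yields both displayed forms of the statement, subject to the extended-real conventions.

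The only step needing care is the arithmetic in \(\overR\), since the paper excludes indeterminate forms such as \(+\infty-\infty\). I would split into cases.

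\emph{Case 1: \(f(z)=+\infty\).} Both inequalities hold trivially, provided \(f^\sharp(\alpha,x)\neq-\infty\). This holds because \(f\) is proper, so there is \(x'\in\dom f\). Then \(f^\sharp(\alpha,x)\ge\phi_\alpha(x'-x)-f(x')>-\infty\), and no expression of the form \(+\infty-\infty\) arises.

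\emph{Case 2: \(f(z)\in\R\).} Properness excludes \(f(z)=-\infty\). The quantity \(\phi_\alpha(z-x)-f(z)\) is then a real number bounded above by \(f^\sharp(\alpha,x)\in(-\infty,+\infty]\).
\begin{itemize}
\item If \(f^\sharp(\alpha,x)=+\infty\), which by the remarks preceding Proposition~\ref{prop:nc_minorized} happens exactly when \(\alpha\notin\slp(f)\), the left-hand side of the first form equals \(-\infty\le f(z)\). The second form reads \(+\infty\ge0\).
\item If \(f^\sharp(\alpha,x)\) is finite, we move the real terms across the inequality.
\end{itemize}

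The hypothesis \(f\in\Fnc(X)\) is not needed beyond properness, so I expect no genuine obstacle. The main point is simply to verify that every expression appearing is well defined in \(\overR\).
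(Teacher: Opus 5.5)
Your proposal is correct and follows the paper's argument exactly: evaluate the supremum defining \(f^\sharp(\alpha,x)\) at the point \(z\) and rearrange. Your extra case analysis of the extended-real arithmetic is a valid refinement that the paper leaves implicit. It uses properness to rule out \(f(z)=-\infty\) and \(f^\sharp(\alpha,x)=-\infty\), so no \(+\infty-\infty\) form arises.
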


\begin{proof}
By definition,
\[
f^\sharp(\alpha,x)
=
\sup_{y\in X}
\bigl(\phi_\alpha(y-x)-f(y)\bigr).
\]
Evaluating the supremum at \(y=z\) gives
\[
f^\sharp(\alpha,x)\ge \phi_\alpha(z-x)-f(z).
\]
Equivalently,
\[
-f^\sharp(\alpha,x)+\phi_\alpha(z-x)\le f(z),
\]
which proves the claim.
\end{proof}

Thus, for each \(\alpha\in\operatorname{slp}(f)\) and \(x\in X\),
the quantity
\[
-f^\sharp(\alpha,x)\in\mathbb R
\]
is the largest vertical shift associated with the slope \(\alpha\)
and centre \(x\) for which the corresponding norm-cone function
remains below \(f\). For \(\alpha\notin\operatorname{slp}(f)\), one has
\(f^\sharp(\alpha,x)=+\infty\) for every \(x\in X\), so no real maximal
admissible height is associated with such a slope. This observation
motivates the notion of admissible heights introduced in the next
subsection.

\subsection{Admissible Heights and Norm-cone Supports}

The conjugate \(f^\sharp\) determines the maximal vertical shifts for which a
norm-cone function remains below \(f\). This leads to the following notion.

\begin{definition}\label{def:admissible_heights}
Let \(X\) be a normed space and let \(f\in\mathcal F_{\mathrm{nc}}(X)\).
For \(\alpha\in\slp(f)\) and \(x\in X\), define the set of
\emph{admissible heights} by
\[
\mathcal H_f(\alpha,x)
:=
\bigl\{
r\in\mathbb R:
r+\phi_\alpha(z-x)\le f(z)\ \text{for all } z\in X
\bigr\}.
\]
Each function \(z\mapsto r+\phi_\alpha(z-x)\), with
\(r\in\mathcal H_f(\alpha,x)\), is called a \emph{norm-cone minorant} of \(f\).
If, in addition, it touches \(f\) at some point \(\bar z\in X\), that is,
\[
r+\phi_\alpha(\bar z-x)=f(\bar z),
\]
we call it a \emph{norm-cone support} of \(f\) at \(\bar z\in\dom f\).
\end{definition}

The admissible heights are completely determined by the conjugate.

\begin{lemma}\label{lem:height_structure}
Let \(X\) be a normed space and let \(f\in\Fnc(X)\). For every
\(\alpha\in\slp(f)\) and \(x\in X\),
\[
\mathcal H_f(\alpha,x)
=
(-\infty,-f^\sharp(\alpha,x)].
\]
\end{lemma}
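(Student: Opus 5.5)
The plan is to prove the set equality by double inclusion, after first recording that \(-f^\sharp(\alpha,x)\) is a real number. Since \(\alpha\in\slp(f)\), the discussion preceding Proposition~\ref{prop:basic_properties_metric} gives \(f^\sharp(\alpha,x)<+\infty\) for every centre \(x\in X\). Since \(f\) is proper, there is some \(z_0\in\dom f\), and then
\[
f^\sharp(\alpha,x)\ge \phi_\alpha(z_0-x)-f(z_0)>-\infty .
\]
Hence \(f^\sharp(\alpha,x)\in\R\), and the interval \((-\infty,-f^\sharp(\alpha,x)]\) is a genuine nonempty closed half-line.

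For the inclusion ``\(\supset\)'', take \(r\le -f^\sharp(\alpha,x)\). Proposition~\ref{prop:fenchel_young_metric} gives
\[
r+\phi_\alpha(z-x)\le -f^\sharp(\alpha,x)+\phi_\alpha(z-x)\le f(z)
\qquad\text{for all } z\in X,
\]
so \(r\in\mathcal H_f(\alpha,x)\).

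For the inclusion ``\(\subset\)'', take \(r\in\mathcal H_f(\alpha,x)\). Then \(\phi_\alpha(z-x)-f(z)\le -r\) for every \(z\in X\). This inequality holds trivially at points where \(f(z)=+\infty\), using the convention \(\phi_\alpha(z-x)-(+\infty)=-\infty\); the function never takes the value \(-\infty\) because \(f\) is proper. Taking the supremum over \(z\) yields \(f^\sharp(\alpha,x)\le -r\), that is, \(r\le -f^\sharp(\alpha,x)\).

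There is no real obstacle here. The only step needing care is the finiteness of \(f^\sharp(\alpha,x)\), which is what makes the right-hand side a well-defined subset of \(\R\). It rests on two facts already in hand: \(\alpha\in\slp(f)\) gives the upper bound, and properness of \(f\) gives the lower bound.
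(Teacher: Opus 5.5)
Your proof is correct and is essentially the paper's argument: the paper writes your two inclusions as one chain of equivalences, $r\in\mathcal H_f(\alpha,x)\Longleftrightarrow r\le\inf_{z\in X}\bigl(f(z)-\phi_\alpha(z-x)\bigr)\Longleftrightarrow r\le -f^\sharp(\alpha,x)$, and your appeal to the Fenchel--Young-type inequality is just the definition of $f^\sharp$ unpacked. Your check that $f^\sharp(\alpha,x)>-\infty$ via properness of $f$ makes explicit a detail the paper leaves implicit when it asserts $f^\sharp(\alpha,x)\in\R$ from $\alpha\in\slp(f)$.
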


\begin{proof}
Since \(\alpha\in\slp(f)\), \(f^\sharp(\alpha,x)\in\R\). Moreover,
\[
\begin{aligned}
r\in\mathcal H_f(\alpha,x)
&\Longleftrightarrow
r+\phi_\alpha(z-x)\le f(z),
\quad\forall z\in X,\\
&\Longleftrightarrow
r\le
\inf_{z\in X}\bigl(f(z)-\phi_\alpha(z-x)\bigr),\\
&\Longleftrightarrow
r\le -f^\sharp(\alpha,x).
\end{aligned}
\]
\end{proof}

The particular metric structure of the norm-cone kernels allows the centre in
the biconjugation formula to be fixed at the evaluation point.

\begin{proposition}
\label{prop:fixed_center_biconjugate}
Let \(X\) be a normed space and let \(f\in\Fnc(X)\). Then, for every \(x\in X\),
\[
f^{\sharp\sharp}(x)
=
\sup_{\alpha\in\slp(f)}
\bigl(-f^\sharp(\alpha,x)\bigr).
\]
\end{proposition}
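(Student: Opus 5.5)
The plan is to prove two inequalities between \(f^{\sharp\sharp}(x)\) and \(\sup_{\alpha\in\slp(f)}(-f^\sharp(\alpha,x))\), the latter being the subfamily of terms in the biconjugate with centre \(x'=x\).

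\emph{Restriction to admissible slopes.} First I will dispose of non-admissible slopes. If \(\alpha\notin\slp(f)\), the discussion preceding Proposition~\ref{prop:nc_minorized} gives \(f^\sharp(\alpha,x')=+\infty\) for every \(x'\in X\). Hence every term \(\phi_\alpha(x-x')-f^\sharp(\alpha,x')\) with such an \(\alpha\) equals \(-\infty\). Since \(f\in\Fnc(X)\), the set \(\slp(f)\) is nonempty, so these terms contribute nothing to the supremum. Therefore
\[
f^{\sharp\sharp}(x)=\sup_{\alpha\in\slp(f),\,x'\in X}\bigl(\phi_\alpha(x-x')-f^\sharp(\alpha,x')\bigr),
\]
and all values of \(f^\sharp\) appearing here are real.

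\emph{The inequality ``\(\ge\)''.} This is immediate from the formula above. Choosing \(x'=x\) gives \(\phi_\alpha(0)=0\), so each term \(-f^\sharp(\alpha,x)\) with \(\alpha\in\slp(f)\) is one of the terms in the supremum.

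\emph{The inequality ``\(\le\)''.} This is the only real step, and it uses the Lipschitz estimate in Proposition~\ref{prop:basic_properties_metric}(1). For \(\alpha\in\slp(f)\) and \(x'\in X\), that estimate gives
\[
f^\sharp(\alpha,x)\le f^\sharp(\alpha,x')+\alpha\|x-x'\|.
\]
Rearranging yields
\[
\phi_\alpha(x-x')-f^\sharp(\alpha,x')=-\alpha\|x-x'\|-f^\sharp(\alpha,x')\le -f^\sharp(\alpha,x).
\]
So every term with an arbitrary centre \(x'\) is dominated by the term with the same slope and centre \(x\). Taking the supremum over \((\alpha,x')\) gives the reverse inequality.

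Geometrically, the cone with centre \(x'\) and height \(-f^\sharp(\alpha,x')\) lies below the optimal cone recentred at \(x\). I expect no genuine obstacle. The only care needed is the bookkeeping with \(\pm\infty\): restricting to \(\slp(f)\) keeps all quantities finite, so the rearrangement above is legitimate.
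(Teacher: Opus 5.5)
Your proof is correct and follows essentially the same route as the paper. The key domination \(\phi_\alpha(x-x')-f^\sharp(\alpha,x')\le -f^\sharp(\alpha,x)\), which you obtain by citing the \(\alpha\)-Lipschitz property in Proposition~\ref{prop:basic_properties_metric}(1), is the same triangle-inequality bound the paper derives inline; equality holds at \(x'=x\), and non-admissible slopes are discarded because \(f^\sharp(\alpha,\cdot)\equiv+\infty\) for them.
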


\begin{proof}
Fix \(x\in X\) and \(\alpha\in\slp(f)\). For every \(x'\in X\), the triangle
inequality gives
\[
\phi_\alpha(z-x')
\ge
\phi_\alpha(z-x)+\phi_\alpha(x-x'),
\qquad z\in X.
\]
Hence
\[
f^\sharp(\alpha,x')
\ge
f^\sharp(\alpha,x)+\phi_\alpha(x-x'),
\]
and therefore
\[
\phi_\alpha(x-x')-f^\sharp(\alpha,x')
\le
-f^\sharp(\alpha,x).
\]
Equality holds for \(x'=x\). Consequently,
\[
\sup_{x'\in X}
\bigl(
\phi_\alpha(x-x')-f^\sharp(\alpha,x')
\bigr)
=
-f^\sharp(\alpha,x).
\]
If \(\alpha\notin\operatorname{slp}(f)\), then, by the observation
following Definition~\ref{def:norm_cone_minorized_functions},
\(f^\sharp(\alpha,x_0)=+\infty\) for every \(x_0\in X\), so such
slopes do not contribute to the biconjugate. Taking the supremum over
\(\alpha\in\slp(f)\) proves the formula.
\end{proof}

\begin{remark}
\label{rem:weak_conjugacy_biconjugate}
The norm-cone construction is closely related to the weak conjugacy used in
nonconvex optimization; see \cite{DincYalcinKasimbeyli2020} and the
references therein. Let \(X^*\) denote the continuous dual of \(X\).
The weak conjugate is defined by
\[
f^w(x^*,\alpha,x_0)
:=
\sup_{y\in X}
\bigl(
\langle x^*,y\rangle
-\alpha\|y-x_0\|
+\alpha\|x_0\|
-f(y)
\bigr),
\]
where \(x^*\in X^*\), \(\alpha\ge0\), and \(x_0\in X\), and the corresponding
weak biconjugate is
\[
f^{ww}(x)
:=
\sup_{(x^*,\alpha)\in X^*\times\R_+}
\bigl(
\langle x^*,x\rangle+\alpha\|x\|
-f^w(x^*,\alpha,x)
\bigr).
\]

Setting \(x^*=0\) gives
\[
f^w(0,\alpha,x_0)
=
f^\sharp(\alpha,x_0)+\alpha\|x_0\|,
\]
so, after accounting for the centre-dependent normalization term
\(\alpha\|x_0\|\) used in the definition of the weak conjugate, the
norm-cone conjugation corresponds to its purely radial specialization.

If \(\mathcal H_w\) denotes the elementary family underlying weak conjugacy,
including vertical translations, then
\[
\NCm\subset\mathcal H_w\subset\LCm.
\]
Hence, by Theorem~\ref{thm:normcone_lcm_equivalence}, these three elementary
families generate the same class of abstract convex functions.

More specifically, if \(f\in\Fnc(X)\), then
\[
f^{ww}=f^{\sharp\sharp}.
\]
Indeed, for every \(x^*\in X^*\), \(\alpha\ge0\), and \(x\in X\),
\[
\begin{aligned}
\langle x^*,x\rangle+\alpha\|x\|-f^w(x^*,\alpha,x)
&=
\inf_{y\in X}
\bigl(
f(y)+\langle x^*,x-y\rangle
+\alpha\|y-x\|
\bigr)\\
&\le
\inf_{y\in X}
\bigl(
f(y)+(\alpha+\|x^*\|)\|y-x\|
\bigr)\\
&=
-f^\sharp(\alpha+\|x^*\|,x)
\le f^{\sharp\sharp}(x).
\end{aligned}
\]
Taking the supremum over \((x^*,\alpha)\) gives
\(f^{ww}(x)\le f^{\sharp\sharp}(x)\). Conversely, restricting the supremum
defining \(f^{ww}\) to \(x^*=0\) and using
Proposition~\ref{prop:fixed_center_biconjugate} yields
\[
f^{ww}(x)
\ge
\sup_{\alpha\ge0}\bigl(-f^\sharp(\alpha,x)\bigr)
=
f^{\sharp\sharp}(x).
\]
Thus the continuous linear component enriches the parametrization of the
first weak conjugate, but it neither enlarges the abstract-convex class
generated by the elementary functions nor, on \(\Fnc(X)\), the resulting
biconjugate.
\end{remark}

Combining Lemma~\ref{lem:height_structure} with
Proposition~\ref{prop:fixed_center_biconjugate} also gives
\[
f^{\sharp\sharp}(x)
=
\sup
\bigl\{
r\in\R:
\exists\,\alpha\in\slp(f)
\text{ with }
r\in\mathcal H_f(\alpha,x)
\bigr\}.
\]

\subsection{Geometric Interpretation of Norm-cone Supports}

We finally make explicit the geometry encoded by the previous constructions. The
elementary objects are the finite-valued functions obtained by translating and
vertically shifting the kernels \(\phi_\alpha\).

\begin{definition}\label{def:norm_cone_parametrization}
For \(x\in X\), \(\alpha\ge0\), and \(r\in\mathbb R\), we denote by
\[
K_{x,\alpha,r}(z):=
r-\alpha\|z-x\|
=
r+\phi_\alpha(z-x),
\qquad z\in X,
\]
the corresponding element of \(\NCm\). We refer to \(x\), \(\alpha\), and \(r\)
as its centre, slope, and height, respectively.
\end{definition}

Geometrically, \(K_{x,\alpha,r}\) is a downward norm-generated surface with
vertex \((x,r)\). Its epigraph is determined by the distance to the centre \(x\),
and the parameter \(\alpha\) controls its aperture. These functions play, in the
present framework, the role played by affine minorants in classical convex
analysis. For \(f\in\Fnc(X)\), let
\[
\mathcal K_f
:=
\bigl\{
(y,\alpha,r)\in X\times\R_+\times\R:
K_{y,\alpha,r}\le f
\bigr\}.
\]
Every \((y,\alpha,r)\in\mathcal K_f\) satisfies
\(\alpha\in\slp(f)\), since
\[
K_{y,\alpha,r}(z)
\ge
r-\alpha\|y\|-\alpha\|z\|,
\qquad z\in X.
\]
The next result identifies \(f^{\sharp\sharp}\) with the upper envelope of all
norm-cone minorants of \(f\).
\begin{proposition}
\label{prop:cone_envelope_representation}
Let \(X\) be a normed space and let \(f\in\Fnc(X)\). Then, for every \(z\in X\),
\[
f^{\sharp\sharp}(z)
=
\sup_{(y,\alpha,r)\in\mathcal K_f}
K_{y,\alpha,r}(z).
\]
\end{proposition}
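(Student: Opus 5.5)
We prove the identity by establishing the two inequalities separately. The tools are Lemma~\ref{lem:height_structure}, which describes the admissible heights as $(-\infty,-f^\sharp(\alpha,y)]$, and Proposition~\ref{prop:fixed_center_biconjugate}, which allows the centre in the biconjugate to be fixed at the evaluation point. Fix $z\in X$ and write $E(z):=\sup_{(y,\alpha,r)\in\mathcal K_f}K_{y,\alpha,r}(z)$.

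For the inequality $E(z)\le f^{\sharp\sharp}(z)$, take any $(y,\alpha,r)\in\mathcal K_f$. As observed just before the statement, $\alpha\in\slp(f)$, so $f^\sharp(\alpha,y)\in\R$. Since $K_{y,\alpha,r}\le f$, we have $r\in\mathcal H_f(\alpha,y)$, and Lemma~\ref{lem:height_structure} gives $r\le -f^\sharp(\alpha,y)$. Therefore
\[
K_{y,\alpha,r}(z)=r+\phi_\alpha(z-y)\le \phi_\alpha(z-y)-f^\sharp(\alpha,y)\le f^{\sharp\sharp}(z),
\]
where the last step is the definition of the biconjugate with the pair $(\alpha,y)$. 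Taking the supremum over $\mathcal K_f$ gives $E(z)\le f^{\sharp\sharp}(z)$.

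For the reverse inequality, Proposition~\ref{prop:fixed_center_biconjugate} gives $f^{\sharp\sharp}(z)=\sup_{\alpha\in\slp(f)}(-f^\sharp(\alpha,z))$. For each $\alpha\in\slp(f)$, set $r:=-f^\sharp(\alpha,z)\in\R$. By Lemma~\ref{lem:height_structure}, $r\in\mathcal H_f(\alpha,z)$, so $K_{z,\alpha,r}\le f$ and $(z,\alpha,r)\in\mathcal K_f$. This cone has its vertex at $z$, so $K_{z,\alpha,r}(z)=r=-f^\sharp(\alpha,z)$, which yields $-f^\sharp(\alpha,z)\le E(z)$. Taking the supremum over $\alpha\in\slp(f)$, which is nonempty because $f\in\Fnc(X)$, gives $f^{\sharp\sharp}(z)\le E(z)$.

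There is no serious obstacle in this argument. The only points needing care are the bookkeeping of slopes: every triple in $\mathcal K_f$ must have $\alpha\in\slp(f)$ so that the conjugate is finite and Lemma~\ref{lem:height_structure} applies, and $\slp(f)\neq\varnothing$ is needed so that both suprema are taken over nonempty index sets, which rules out $-\infty$ conventions.
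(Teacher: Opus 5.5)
Your proof is correct and follows the paper's route. For the upper bound you apply Lemma~\ref{lem:height_structure} to each triple in $\mathcal K_f$ and then use the definition of $f^{\sharp\sharp}$. For the lower bound you use the triple $(z,\alpha,-f^\sharp(\alpha,z))$, with vertex at the evaluation point, and then Proposition~\ref{prop:fixed_center_biconjugate}.
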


\begin{proof}
If \((y,\alpha,r)\in\mathcal K_f\), then
\(r\in\mathcal H_f(\alpha,y)\), and therefore, by
Lemma~\ref{lem:height_structure},
\[
K_{y,\alpha,r}(z)
\le
-f^\sharp(\alpha,y)+\phi_\alpha(z-y)
\le
f^{\sharp\sharp}(z).
\]
Thus the right-hand side does not exceed \(f^{\sharp\sharp}(z)\).

Conversely, fix \(\alpha\in\slp(f)\). By
Lemma~\ref{lem:height_structure},
\[
-f^\sharp(\alpha,z)\in\mathcal H_f(\alpha,z),
\]
and hence
\[
\bigl(z,\alpha,-f^\sharp(\alpha,z)\bigr)\in\mathcal K_f.
\]
Therefore,
\[
\sup_{(y,\beta,r)\in\mathcal K_f}K_{y,\beta,r}(z)
\ge
K_{z,\alpha,-f^\sharp(\alpha,z)}(z)
=
-f^\sharp(\alpha,z).
\]
Since this holds for every \(\alpha\in\slp(f)\), we obtain
\[
\sup_{(y,\beta,r)\in\mathcal K_f}K_{y,\beta,r}(z)
\ge
\sup_{\alpha\in\slp(f)}
\bigl(-f^\sharp(\alpha,z)\bigr)
=
f^{\sharp\sharp}(z),
\]
where the last equality follows from
Proposition~\ref{prop:fixed_center_biconjugate}.
\end{proof}

Equivalently,
\[
\operatorname{epi}(f^{\sharp\sharp})
=
\bigcap_{(y,\alpha,r)\in\mathcal K_f}
\operatorname{epi}(K_{y,\alpha,r}).
\]

Thus the biconjugate is the largest function generated from norm-cone
minorants of \(f\). In particular, every point at which \(f\) admits a
norm-cone support is a point of exact biconjugation.

\begin{corollary}
\label{cor:pointwise_exactness_norm_cone_support}
Let \(X\) be a normed space, let \(f\in\mathcal F_{\rm nc}(X)\), and let
\(x\in\dom f\). If \(f\) admits a norm-cone support at \(x\), then

\[
f^{\sharp\sharp}(x)=f(x).
\]

\end{corollary}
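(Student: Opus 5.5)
The plan is to sandwich \(f(x)\) between \(f^{\sharp\sharp}(x)\) and the value of the support at \(x\). Proposition~\ref{prop:basic_properties_metric}(2) already gives \(f^{\sharp\sharp}(x)\le f(x)\). For the reverse inequality we will use the envelope representation in Proposition~\ref{prop:cone_envelope_representation}.

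By Definition~\ref{def:admissible_heights}, a norm-cone support of \(f\) at \(x\) consists of a centre \(y\in X\), a slope \(\alpha\ge0\) and a height \(r\in\R\) with two properties. First, \(K_{y,\alpha,r}\le f\) on \(X\), so that \((y,\alpha,r)\in\mathcal K_f\); recall that membership in \(\mathcal K_f\) automatically forces \(\alpha\in\slp(f)\). Second, the support touches \(f\) at \(x\), that is, \(K_{y,\alpha,r}(x)=f(x)\).

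Proposition~\ref{prop:cone_envelope_representation} then yields
\[
f^{\sharp\sharp}(x)\ge K_{y,\alpha,r}(x)=f(x).
\]
Combining this with the first inequality gives \(f^{\sharp\sharp}(x)=f(x)\).

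There is no genuine obstacle. The only point to check is that the support's centre \(y\) need not coincide with \(x\). This is exactly why we invoke the envelope formula, which allows arbitrary centres, rather than the fixed-centre formula of Proposition~\ref{prop:fixed_center_biconjugate}. Alternatively, one could pass through Lemma~\ref{lem:height_structure}: it gives \(r\le -f^\sharp(\alpha,y)\), and then \(f(x)=r-\alpha\|x-y\|\le -f^\sharp(\alpha,y)+\phi_\alpha(x-y)\le f^{\sharp\sharp}(x)\) by the definition of the biconjugate. The finiteness of \(f(x)\), guaranteed by \(x\in\dom f\) and properness, ensures that no indeterminate expressions arise.
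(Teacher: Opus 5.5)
Your proposal is correct and follows essentially the same argument as the paper. You take \(f^{\sharp\sharp}\le f\) from Proposition~\ref{prop:basic_properties_metric}(2), and you apply the envelope representation of Proposition~\ref{prop:cone_envelope_representation} to the support \(K_{y,\alpha,r}\) to obtain \(f^{\sharp\sharp}(x)\ge K_{y,\alpha,r}(x)=f(x)\).
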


\begin{proof}
Let \(K_{y,\alpha,r}\) be a norm-cone support of \(f\) at \(x\). Then
\(K_{y,\alpha,r}\leq f\) and

\[
K_{y,\alpha,r}(x)=f(x).
\]

By Proposition~\ref{prop:cone_envelope_representation},

\[
f^{\sharp\sharp}(x)
\geq K_{y,\alpha,r}(x)
=f(x).
\]

On the other hand, Proposition~\ref{prop:basic_properties_metric} yields
\(f^{\sharp\sharp}\leq f\). Hence

\[
f^{\sharp\sharp}(x)=f(x).
\]

\end{proof}

Pointwise exactness at support points becomes global when norm-cone
minorants recover \(f\) everywhere. This yields the following
Fenchel--Moreau-type exactness criterion.

\begin{corollary}
\label{cor:exact_norm_cone_biconjugation}
Let \(X\) be a normed space and let \(f\in\Fnc(X)\). Then the following
assertions are equivalent:
\begin{enumerate}
\item \(f^{\sharp\sharp}=f\);
\item \(f\) is \(\NCm\)-convex;
\item \(f\) is lower semicontinuous.
\end{enumerate}
\end{corollary}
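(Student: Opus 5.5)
The plan is to prove the cycle of implications \(1)\Rightarrow2)\Rightarrow3)\Rightarrow1)\). Everything will be routed through Proposition~\ref{prop:cone_envelope_representation}, which identifies \(f^{\sharp\sharp}\) with the upper envelope of all norm-cone minorants of \(f\), and through Theorem~\ref{thm:normcone_lcm_equivalence}.

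\(1)\Rightarrow2)\). Assume \(f^{\sharp\sharp}=f\). Since \(f\in\Fnc(X)\), Proposition~\ref{prop:nc_minorized} gives \(S^{-}(\NCm,f)\neq\varnothing\). The elements of \(S^{-}(\NCm,f)\) are exactly the functions \(K_{y,\alpha,r}\) with \((y,\alpha,r)\in\mathcal K_f\). Proposition~\ref{prop:cone_envelope_representation} then reads
\[
f(z)=f^{\sharp\sharp}(z)=\sup\{h(z):h\in S^{-}(\NCm,f)\},\qquad z\in X,
\]
and this is precisely the definition of \(\NCm\)-convexity.

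\(2)\Rightarrow3)\). This is contained in Theorem~\ref{thm:normcone_lcm_equivalence}. One can also argue directly: an upper envelope of continuous functions is lower semicontinuous.

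\(3)\Rightarrow1)\). Assume \(f\) is lower semicontinuous. Because \(f\in\Fnc(X)\), Proposition~\ref{prop:nc_minorized}(2) supplies a norm-cone minorant \(h_0\in\NCm\). Thus \(f\) is lower semicontinuous and bounded below by a norm-cone function, which is item 3) of Theorem~\ref{thm:normcone_lcm_equivalence}. That theorem therefore makes \(f\) \(\NCm\)-convex, so
\[
f=\sup\{h:h\in S^{-}(\NCm,f)\}.
\]
By Proposition~\ref{prop:cone_envelope_representation}, the right-hand side equals \(f^{\sharp\sharp}\), and hence \(f^{\sharp\sharp}=f\). This step also shows \(3)\Rightarrow2)\) directly.

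None of these steps is a genuine obstacle. The substantive analysis (approximating a lower semicontinuous function from below by Lipschitz concave minorants) is already packaged in Gorokhovik's characterization, which enters through Theorem~\ref{thm:normcone_lcm_equivalence}. The only point requiring care is the bookkeeping identification of \(S^{-}(\NCm,f)\) with \(\{K_{y,\alpha,r}:(y,\alpha,r)\in\mathcal K_f\}\). This identification is immediate from Definition~\ref{def:normcone_functions} and Definition~\ref{def:norm_cone_parametrization}.
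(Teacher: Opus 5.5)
Your proof is correct and follows essentially the paper's route. The paper also gets \(1)\Leftrightarrow2)\) from Proposition~\ref{prop:cone_envelope_representation}, together with the nonemptiness of \(S^{-}(\NCm,f)\) guaranteed by Proposition~\ref{prop:nc_minorized}, and gets \(2)\Leftrightarrow3)\) from Theorem~\ref{thm:normcone_lcm_equivalence}. Your explicit identification of \(S^{-}(\NCm,f)\) with \(\{K_{y,\alpha,r}:(y,\alpha,r)\in\mathcal K_f\}\) and your direct semicontinuity argument for \(2)\Rightarrow3)\) only spell out steps the paper leaves implicit.
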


\begin{proof}
By Proposition~\ref{prop:cone_envelope_representation},
\(f^{\sharp\sharp}=f\) if and only if \(f\) is \(\NCm\)-convex.
Moreover, since \(f\in\mathcal F_{\rm nc}(X)\),
Proposition~\ref{prop:nc_minorized} guarantees that \(f\) is bounded
from below by a norm-cone function. Hence
Theorem~\ref{thm:normcone_lcm_equivalence} shows that, within
\(\mathcal F_{\rm nc}(X)\), lower semicontinuity is equivalent to
\(\NCm\)-convexity. Therefore the three assertions are equivalent.
\end{proof}

\subsection{Examples of Norm-cone Conjugates}\label{sec:examples}
The following examples illustrate how the abstract-convexity framework generated
by norm-cone functions leads to explicit conjugation formulae for concrete
metric objects.

\begin{example}
Let \(\mathbf{1}_{(0,\infty)},\mathbf{1}_{[0,\infty)}:\R\to\R\) be defined by
\[
\mathbf{1}_{(0,\infty)}(x)
:=
\begin{cases}
1, & x\in(0,\infty),\\
0, & x\notin(0,\infty),
\end{cases}
\qquad
\mathbf{1}_{[0,\infty)}(x)
:=
\begin{cases}
1, & x\in[0,\infty),\\
0, & x\notin[0,\infty).
\end{cases}
\]
Then \(\slp(\mathbf{1}_{(0,\infty)})=\slp(\mathbf{1}_{[0,\infty)})=\R_+\), and, for every \(x\in\R\),
\[
\mathbf{1}^\sharp_{(0,\infty)}(\alpha,x)
=
\mathbf{1}^\sharp_{[0,\infty)}(\alpha,x)
=
\begin{cases}
0, & x \le 0,\\[2pt]
-\alpha x, & 0<x \text{ and } 0\le \alpha \le \dfrac{1}{x},\\[6pt]
-1, & 0<x \text{ and } \alpha>\dfrac{1}{x}.
\end{cases}
\]
In contrast, the metric biconjugate distinguishes the two cases:
\[
\mathbf{1}^{\sharp\sharp}_{(0,\infty)}
=
\mathbf{1}_{(0,\infty)},
\qquad
\mathbf{1}^{\sharp\sharp}_{[0,\infty)}
=
\mathbf{1}_{(0,\infty)}
\neq
\mathbf{1}_{[0,\infty)}.
\]
\end{example}

\begin{example}
Let
\(
f(x):=-|x|.
\)
Then \(\slp(f)=[1,+\infty)\), and
\(f^\sharp(\alpha,x)=|x|\) for every \(\alpha\ge1\) and \(x\in\R\).
Consequently, \(f^{\sharp\sharp}=f\).
This example shows that admissible slopes need not coincide with \(\R_+\), and
that the asymptotic behaviour of the function determines the minimal slope of
supporting norm-cones.
\end{example}

\begin{example}
Let \(X\) be a normed space, let \(A\subset X\) be nonempty and closed, and
define \(f(x):=d_A(x)\), \(x\in X\).
Then \(f\) is \(1\)-Lipschitz and bounded from below by \(0\). Hence
\(
\slp(f)=\R_+.
\)
Moreover, for every \(\alpha\in\R_+\) and every \(x\in X\),
\[
f^\sharp(\alpha,x)
=
-\min\{\alpha,1\}\,d_A(x).
\]
In particular, \(f^\sharp(\alpha,x)=-d_A(x)\) for every \(\alpha\ge1\) and
\(x\in X\). Consequently, \(f^{\sharp\sharp}=f\).

This example shows that norm-cone conjugation applies naturally to purely
metric objects. Although \(f\) need not be smooth or convex, it admits an
exact supporting norm-cone at every point and is therefore norm-cone
supportable.
\end{example}

\section{Norm-cone Subdifferential}
\label{sec:norm_cone_subdifferential}

We now introduce a pointwise notion of support associated with the
norm-cone conjugation. In contrast with the previous section, where
biconjugation was described through all norm-cone minorants of \(f\), the
following definition singles out those norm-cones whose vertex is placed at a
given point of the graph of \(f\).

\begin{definition}
\label{def:metric_subdifferential}
Let \(X\) be a normed space and let
\(f:X\to\overR\). For every \(x\in X\) such that \(f(x)\in\R\), the
\emph{norm-cone subdifferential} of \(f\) at \(x\) is defined by
\[
\partial^\sharp f(x)
:=
\bigl\{
\alpha\ge0:
f(z)\ge f(x)+\phi_\alpha(z-x)
\ \text{for all } z\in X
\bigr\}.
\]
If \(f(x)\notin\R\), we set
\[
\partial^\sharp f(x):=\varnothing.
\]
\end{definition}

The next result records the elementary structure of the norm-cone
subdifferential at a point.

\begin{proposition}\label{prop:left_closed_subdiff}
Let \(X\) be a normed space, let
\(f:X\to\overR\) be proper, and let
\(x\in\operatorname{dom}f\). If \(\partial^\sharp f(x)\neq\varnothing\), then
\(f\) is lower semicontinuous at \(x\) and there exists
\(\alpha_x\ge0\) such that
\[
\partial^\sharp f(x)=[\alpha_x,+\infty).
\]
\end{proposition}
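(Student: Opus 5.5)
The plan is to prove three things directly from Definition~\ref{def:metric_subdifferential}: lower semicontinuity at \(x\), upward closedness of \(\partial^\sharp f(x)\), and closedness of \(\partial^\sharp f(x)\) in \(\R\). Since \(\partial^\sharp f(x)\) is a nonempty subset of \(\R_+\), these will show it is a closed up-ray \([\alpha_x,+\infty)\), with \(\alpha_x:=\inf\partial^\sharp f(x)\).

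First, lower semicontinuity at \(x\). Because \(x\in\dom f\) and \(f\) is proper, \(f(x)\in\R\). Fix \(\alpha\in\partial^\sharp f(x)\). Then for every \(z\in X\),
\[
f(z)\ge f(x)-\alpha\|z-x\|.
\]
Letting \(z\to x\) gives \(\liminf_{z\to x}f(z)\ge f(x)\), which is lower semicontinuity at \(x\). Equivalently, the norm-cone \(K_{x,\alpha,f(x)}\) is a continuous minorant of \(f\) that touches \(f\) at \(x\).

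Second, upward closedness. If \(\alpha\in\partial^\sharp f(x)\) and \(\beta\ge\alpha\), then \(\phi_\beta(z-x)\le\phi_\alpha(z-x)\) for all \(z\). Hence
\[
f(z)\ge f(x)+\phi_\alpha(z-x)\ge f(x)+\phi_\beta(z-x),
\]
so \(\beta\in\partial^\sharp f(x)\). Thus \(\partial^\sharp f(x)\) is an interval of the form \((\alpha_x,+\infty)\) or \([\alpha_x,+\infty)\), where \(\alpha_x=\inf\partial^\sharp f(x)\in\R_+\).

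Third, the only step needing a small argument is that the infimum is attained. Take \(\alpha_n\in\partial^\sharp f(x)\) with \(\alpha_n\downarrow\alpha_x\). For each fixed \(z\), the inequality \(f(z)\ge f(x)-\alpha_n\|z-x\|\) holds for every \(n\). The right-hand side converges to \(f(x)-\alpha_x\|z-x\|\), and if \(f(z)=+\infty\) there is nothing to prove. Passing to the limit gives \(\alpha_x\in\partial^\sharp f(x)\). Equivalently, \(\partial^\sharp f(x)\) is an intersection over \(z\) of closed half-lines in \(\alpha\), hence closed. This yields \(\partial^\sharp f(x)=[\alpha_x,+\infty)\).

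None of these steps is a real obstacle. The only care needed is to handle points \(z\notin\dom f\) in the limit argument, which is trivial since then \(f(z)=+\infty\).
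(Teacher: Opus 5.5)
Your proposal is correct and follows essentially the same route as the paper: lower semicontinuity at \(x\) from the norm-cone minorant \(f(x)-\alpha\|z-x\|\), upward closedness from monotonicity of \(\phi_\alpha\) in \(\alpha\), and attainment of \(\alpha_x=\inf\partial^\sharp f(x)\) by passing to the limit along \(\alpha_n\downarrow\alpha_x\). Your extra remark that \(\partial^\sharp f(x)\) is an intersection of closed half-lines in \(\alpha\) is just a reformulation of that last step.
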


\begin{proof}
Let \(\alpha\in\partial^\sharp f(x)\). Then
\[
f(z)\ge f(x)-\alpha\|z-x\|,
\qquad \forall z\in X.
\]
If \(x_n\to x\), it follows that
\[
\liminf_{n\to\infty} f(x_n)
\ge
\liminf_{n\to\infty}
\bigl(f(x)-\alpha\|x_n-x\|\bigr)
=
f(x),
\]
so \(f\) is lower semicontinuous at \(x\).

Moreover, if \(\beta\ge\alpha\), then
\[
f(z)\ge f(x)-\alpha\|z-x\|
\ge
f(x)-\beta\|z-x\|,
\qquad \forall z\in X,
\]
and hence \(\beta\in\partial^\sharp f(x)\). Thus
\(\partial^\sharp f(x)\) is upward closed.

Set \(\alpha_x:=\inf\partial^\sharp f(x)\). Choose
\((\alpha_n)\subset\partial^\sharp f(x)\) with
\(\alpha_n\downarrow\alpha_x\). For every \(z\in X\),
\[
f(z)\ge f(x)-\alpha_n\|z-x\|,
\qquad \forall n.
\]
Passing to the limit gives
\(f(z)\ge f(x)-\alpha_x\|z-x\|\), so
\(\alpha_x\in\partial^\sharp f(x)\). Therefore
\(\partial^\sharp f(x)=[\alpha_x,+\infty)\).
\end{proof}

The preceding proposition allows us to associate with each point the minimal
slope of a norm-cone support whenever such a support exists.

\begin{definition}
\label{def:minimal_norm-cone_support_slope}
Let \(X\) be a normed space and let
\(f:X\to\overR\) be proper. The \emph{minimal norm-cone support slope} of \(f\) is the function
\[
|\partial^\sharp f|:X\to[0,+\infty],
\qquad
|\partial^\sharp f|(x):=
\inf\partial^\sharp f(x),
\]
with the convention \(\inf\varnothing:=+\infty\).
\end{definition}

For \(x\in\dom f\), \(|\partial^\sharp f|(x)\) is the minimal slope of a norm-cone support
with vertex at \((x,f(x))\). This is a global support quantity: it records the
least aperture needed for a norm-cone based at \(x\) to remain below the whole
function \(f\).

\begin{remark}
\label{rem:relation_gorokhovik_lcm_subdiff}
The norm-cone subdifferential is closely related to the
\(\LCm\)-subdifferentiability considered by Gorokhovik \cite{Gorokhovik2022}.
By \cite[Theorem~3.8]{Gorokhovik2022}, a function \(f\) is
\(\LCm\)-subdifferentiable at \(x\in\dom f\) if and only if there exists
\(k\ge0\) such that
\[
f(z)\ge f(x)-k\|z-x\|,
\qquad z\in X.
\]
Consequently,
\[
\partial^\sharp f(x)\neq\varnothing
\quad\Longleftrightarrow\quad
f\text{ is }\LCm\text{-subdifferentiable at }x.
\]
The additional information retained by \(\partial^\sharp f(x)\) is metric:
by Proposition~\ref{prop:left_closed_subdiff},
\[
\partial^\sharp f(x)
=
[|\partial^\sharp f|(x),+\infty),
\]
whenever it is nonempty, so that
\(|\partial^\sharp f|(x)\) is the minimal slope of a radial norm-cone
support at \(x\).

The same quantity also admits a precise interpretation in terms of the weak
subdifferential introduced in \cite{DincYalcinKasimbeyli2020}. Recall that
\(
(x^*,\alpha)\in\partial^w f(x)
\quad\Longleftrightarrow\quad
f(z)\ge
f(x)+\langle x^*,z-x\rangle-\alpha\|z-x\|,
\quad z\in X.
\)
Hence,
\(
\alpha\in\partial^\sharp f(x)
\quad\Longleftrightarrow\quad
(0,\alpha)\in\partial^w f(x),
\)
while
\(
(x^*,\alpha)\in\partial^w f(x)
\quad\Longrightarrow\quad
\alpha+\|x^*\|\in\partial^\sharp f(x).
\)
These relations imply that
\(
\partial^\sharp f(x)\neq\varnothing
\quad\Longleftrightarrow\quad
\partial^w f(x)\neq\varnothing,
\)
and
\[
|\partial^\sharp f|(x)
=
\inf
\bigl\{
\alpha+\|x^*\|:
(x^*,\alpha)\in\partial^w f(x)
\bigr\}.
\]
Thus, the minimal norm-cone support slope is exactly the least radial cost
associated with a weak subgradient, once the norm of its linear component has
been absorbed into the radial coefficient.
\end{remark}

The next result is the radial norm-cone counterpart of the elementary optimality
conditions obtained through \(\LCm\)-subdifferentials
\cite{Gorokhovik2022}: \(0\in\partial^\sharp f(\bar x)\) means that the
horizontal support \(z\mapsto f(\bar x)\) lies below \(f\).

\begin{proposition}
\label{prop:minimal_slope_minimizers}
Let \(X\) be a normed space, let
\(f:X\to\overR\) be proper, and let
\(\bar x\in\operatorname{dom}f\). Then the following assertions are equivalent:
\begin{enumerate}
\item \(f\) attains a global minimum at \(\bar x\);

\item \(0\in\partial^\sharp f(\bar x)\);

\item \(|\partial^\sharp f|(\bar x)=0\).
\end{enumerate}
Moreover, if any of these conditions holds, then
\(f\in\mathcal F_{\mathrm{nc}}(X)\). The converse implication does not hold in
general.
\end{proposition}

\begin{proof}
By definition,
\[
0\in\partial^\sharp f(\bar x)
\quad\Longleftrightarrow\quad
f(z)\ge f(\bar x)
\quad\forall z\in X,
\]
which is equivalent to \(\bar x\) being a global minimizer. Moreover, by
Proposition~\ref{prop:left_closed_subdiff}, whenever
\(\partial^\sharp f(\bar x)\neq\varnothing\),
\[
\partial^\sharp f(\bar x)
=
[|\partial^\sharp f|(\bar x),+\infty),
\]
so \(0\in\partial^\sharp f(\bar x)\) if and only if
\(|\partial^\sharp f|(\bar x)=0\).

If these conditions hold, then \(f(z)\ge f(\bar x)\) for every \(z\in X\),
hence \(0\in\slp(f)\) and \(f\in\Fnc(X)\). The converse fails, for example,
for \(g(x)=-\sqrt{|x|}\), which belongs to \(\Fnc(\R)\) since
\[
-\sqrt{|x|}\ge-|x|-1,
\]
but has no global minimizer.
\end{proof}

The density theorem for \(\LCm\)-subdifferentiability due to Gorokhovik
\cite{Gorokhovik2022} transfers directly to the present setting, since
\(\NCm\)- and \(\LCm\)-convexity coincide by
Theorem~\ref{thm:normcone_lcm_equivalence}.

\begin{corollary}
\label{cor:density_norm_cone_supportability}
Let \(X\) be a Banach space and let
\(f:X\to\overR\) be \(\NCm\)-convex. Then the set
\[
\bigl\{
x\in\operatorname{dom} f:
\partial^\sharp f(x)\neq\varnothing
\bigr\}
\]
is dense in \(\operatorname{dom} f\).
\end{corollary}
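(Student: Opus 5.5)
The plan is to reduce the statement to Gorokhovik's density theorem for \(\LCm\)-subdifferentiability \cite{Gorokhovik2022}. Two facts from the paper provide the translation: Theorem~\ref{thm:normcone_lcm_equivalence} and Remark~\ref{rem:relation_gorokhovik_lcm_subdiff}.

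\textbf{Step 1: pass to \(\LCm\)-convexity.} Since \(f\) is \(\NCm\)-convex, the implication \(1)\Rightarrow 2)\) of Theorem~\ref{thm:normcone_lcm_equivalence} shows that \(f\) is \(\LCm\)-convex. Equivalently, by \(1)\Rightarrow 3)\), \(f\) is lower semicontinuous and bounded below by a norm-cone function, hence by a Lipschitz continuous function. Since \(X\) is a Banach space, Gorokhovik's density theorem for \(\LCm\)-subdifferentials applies. It states that for an \(\LCm\)-convex function (equivalently, a lower semicontinuous function minorized by a Lipschitz function) on a Banach space, the set of points of \(\dom f\) at which \(f\) is \(\LCm\)-subdifferentiable is dense in \(\dom f\).

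\textbf{Step 2: identify the two notions of subdifferentiability.} By \cite[Theorem~3.8]{Gorokhovik2022}, as recorded in Remark~\ref{rem:relation_gorokhovik_lcm_subdiff}, \(f\) is \(\LCm\)-subdifferentiable at \(x\in\dom f\) if and only if there is \(k\ge0\) with \(f(z)\ge f(x)-k\|z-x\|\) for all \(z\in X\). By Definition~\ref{def:metric_subdifferential}, this says exactly that \(k\in\partial^\sharp f(x)\). Here \(f(x)\in\R\) because \(f\) is proper and \(x\in\dom f\), so the definition applies. Hence the set of points of \(\LCm\)-subdifferentiability in \(\dom f\) coincides with \(\{x\in\dom f:\partial^\sharp f(x)\neq\varnothing\}\), and density transfers verbatim.

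\textbf{Main obstacle.} The only delicate point is matching hypotheses: the density result in \cite{Gorokhovik2022} must be invoked in exactly the form that requires completeness of \(X\) together with \(\LCm\)-convexity. If it were instead phrased with lower semicontinuity plus Lipschitz minorization, Step 1 covers that through assertion 3) of Theorem~\ref{thm:normcone_lcm_equivalence}.

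If a self-contained argument were wanted instead, I would use the Ekeland variational principle. Fix \(x_0\in\dom f\) and \(\varepsilon>0\), and take a norm-cone minorant \(f(z)\ge r-\alpha\|z-x_0\|\), which exists by Proposition~\ref{prop:nc_minorized}. Apply Ekeland to the lower semicontinuous function \(g(z)=f(z)+2\alpha\|z-x_0\|\), which is bounded below on the complete space \(X\), with a parameter \(\lambda\) chosen so that the resulting point \(\bar x\) satisfies \(\|\bar x-x_0\|<\varepsilon\). The Ekeland inequality \(g(z)\ge g(\bar x)-\lambda\|z-\bar x\|\), combined with the triangle inequality, gives \(f(z)\ge f(\bar x)-(2\alpha+\lambda)\|z-\bar x\|\). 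Hence \(2\alpha+\lambda\in\partial^\sharp f(\bar x)\), which again yields density.
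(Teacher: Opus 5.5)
Your main argument is correct and is the paper's proof. You pass from \(\NCm\)-convexity to \(\LCm\)-convexity via Theorem~\ref{thm:normcone_lcm_equivalence}, invoke Gorokhovik's density theorem \cite[Theorem~3.12]{Gorokhovik2022} on the Banach space \(X\), and use \cite[Theorem~3.8]{Gorokhovik2022} to identify \(\LCm\)-subdifferentiability at \(x\in\dom f\) with \(\partial^\sharp f(x)\neq\varnothing\). Your alternative Ekeland sketch is also sound and would make the corollary self-contained, since it essentially reproves the cited density theorem from lower semicontinuity and a single norm-cone minorant.
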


\begin{proof}
By Theorem~\ref{thm:normcone_lcm_equivalence}, \(f\) is \(\LCm\)-convex. Hence,
by \cite[Theorem~3.12]{Gorokhovik2022}, the set of points where \(f\) is
\(\LCm\)-subdifferentiable is dense in \(\operatorname{dom} f\).

By \cite[Theorem~3.8]{Gorokhovik2022}, \(\LCm\)-subdifferentiability at
\(x\in\operatorname{dom}f\) is equivalent to the existence of \(k\ge0\) such that
\[
f(z)\ge f(x)-k\|z-x\|,
\qquad \forall z\in X.
\]
This is precisely \(k\in\partial^\sharp f(x)\). Therefore
\(\{x\in\operatorname{dom}f:\partial^\sharp f(x)\neq\varnothing\}\) is dense in
\(\operatorname{dom}f\).
\end{proof}

\section{Duality in Norm-cone Optimization}
\label{sec:dual_norm_cone_optimization}
The aim of this section is to develop a duality scheme based on norm-cone conjugation.  From the viewpoint of nonconvex optimization, the main feature of the construction is a substantial reduction of the effective dual
parameterization.

The construction follows the perturbation viewpoint: deviations from the
nominal problem are encoded by a variable \(z\in Z\), while dual information
is extracted through norm-cone conjugation in the perturbation variable.
Although the conjugate itself depends on both a slope and a centre, we shall
see that, in the complete biconjugate expression at the nominal
perturbation, optimization over the centre is attained at \(0_Z\). Hence the
dual problem can be reduced to the slope as its effective parameter without
changing the optimal value.

Let \((X,\|\cdot\|)\) and \((Z,\|\cdot\|)\) be real normed spaces, and let
\(\Omega\subset X\) be nonempty. We consider the primal problem
\begin{equation}\label{eq:primal_problem}
\inf\{f(x):x\in\Omega\},
\qquad (P(0_Z)).
\end{equation}

\subsection{Dual Problem with Respect to Perturbations}
\label{subsec:dual_wrt_perturbations}

Let \(\Phi:X\times Z\to\overline{\mathbb R}\) be an extended-real-valued
perturbation function satisfying
\begin{equation}\label{eq:perturbation_consistency}
\Phi(x,0_Z)=f(x),
\qquad \forall x\in\Omega,
\end{equation}
and
\[
\Phi(x,0_Z)=+\infty,
\qquad \forall x\in X\setminus\Omega.
\]
Thus the constraint \(x\in\Omega\) is incorporated into the perturbation
function.

\begin{definition}
\label{def:value_function_duality_section}
The value function associated with \(\Phi\) is
\begin{equation}\label{eq:value_function_duality_section}
v_\Phi(z):=\inf_{x\in X}\Phi(x,z),
\qquad z\in Z.
\end{equation}
In particular,
\begin{equation}\label{eq:v0_equals_primal_value}
v_\Phi(0_Z)
=
\inf_{x\in X}\Phi(x,0_Z)
=
\inf_{x\in\Omega}f(x).
\end{equation}
\end{definition}

Perturbation duality is based on the partial norm-cone conjugate. The notation
\(\sharp_Z\) means conjugation only in the perturbation variable \(z\).

\begin{definition}
\label{def:global_norm_cone_conjugate_perturbation}
The global norm-cone conjugate of \(\Phi\) with respect to the perturbation
variable is the function
\(\Phi^{\sharp_Z}:\mathbb R_+\times Z\to\overline{\mathbb R}\) defined by
\begin{equation}\label{eq:global_norm_cone_conjugate_perturbation}
\Phi^{\sharp_Z}(\alpha,z)
:=
\sup_{(x',z')\in X\times Z}
\bigl(
-\alpha\|z'-z\|-\Phi(x',z')
\bigr).
\end{equation}
\end{definition}

\begin{proposition}
\label{prop:partial_conjugate_value_function}
Assume that \(v_\Phi\) is proper. Then
\[
\Phi^{\sharp_Z}(\alpha,z)
=
v_\Phi^\sharp(\alpha,z),
\qquad
(\alpha,z)\in\mathbb R_+\times Z.
\]
\end{proposition}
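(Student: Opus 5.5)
The plan is to split the supremum defining \(\Phi^{\sharp_Z}(\alpha,z)\) into an iterated supremum: first over \(x'\in X\) with \(z'\) fixed, then over \(z'\in Z\). The only ingredient needed is the elementary identity \(\sup_{x'}(-\Phi(x',z'))=-\inf_{x'}\Phi(x',z')=-v_\Phi(z')\). For a fixed \((\alpha,z)\in\mathbb R_+\times Z\) this gives
\[
\Phi^{\sharp_Z}(\alpha,z)
=\sup_{z'\in Z}\Bigl(-\alpha\|z'-z\|+\sup_{x'\in X}\bigl(-\Phi(x',z')\bigr)\Bigr)
=\sup_{z'\in Z}\bigl(\phi_\alpha(z'-z)-v_\Phi(z')\bigr),
\]
and the right-hand side is exactly \(v_\Phi^\sharp(\alpha,z)\) by Definition~\ref{def:metric_conjugates}.

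The key steps, in order, are as follows.

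\begin{enumerate}
\item Justify the interchange of iterated suprema over the product \(X\times Z\). This is valid for any family of extended reals, since \(\sup_{(x',z')}a(x',z')=\sup_{z'}\sup_{x'}a(x',z')\) holds in \(\overline{\mathbb R}\).
\item Pull the finite term \(-\alpha\|z'-z\|\) out of the inner supremum. This requires \(\sup_{x'}(c-\Phi(x',z'))=c-\inf_{x'}\Phi(x',z')\) for real \(c\), which holds in \(\overline{\mathbb R}\) with the standard conventions.
\end{enumerate}

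The only point requiring care, and the expected obstacle, is the arithmetic with infinite values. When \(v_\Phi(z')=+\infty\), every \(\Phi(x',z')=+\infty\), so the inner supremum is \(-\infty\), which agrees with \(\phi_\alpha(z'-z)-v_\Phi(z')=-\infty\). Properness of \(v_\Phi\) rules out \(v_\Phi(z')=-\infty\); this is precisely where the hypothesis is used. It also ensures that \(v_\Phi^\sharp\) is defined in the sense of Definition~\ref{def:metric_conjugates}, which is stated for proper functions. The value \(\Phi(x',z')=-\infty\) cannot occur, because it would force \(v_\Phi(z')=-\infty\). Hence no expression of the form \(+\infty-\infty\) arises, in accordance with the paper's conventions, and the identity holds for all \((\alpha,z)\), including \(\alpha\notin\slp(v_\Phi)\), where both sides equal \(+\infty\).
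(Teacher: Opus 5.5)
Your proof is correct and is essentially the paper's argument: the paper also writes the supremum over \(X\times Z\) as an iterated supremum and identifies the inner term \(-\alpha\|z'-z\|-\inf_{x'}\Phi(x',z')\) with \(-\alpha\|z'-z\|-v_\Phi(z')\). Your handling of infinite values is a refinement the paper leaves implicit, though the arithmetic would survive even \(v_\Phi(z')=-\infty\) (both sides are \(+\infty\), since \(-\alpha\|z'-z\|\) is real); properness is really needed only so that \(v_\Phi^\sharp\) is defined.
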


\begin{proof}
For every \((\alpha,z)\in\mathbb R_+\times Z\),
\begin{align*}
\Phi^{\sharp_Z}(\alpha,z)
&=
\sup_{(x',z')\in X\times Z}
\bigl(
-\alpha\|z'-z\|-\Phi(x',z')
\bigr)\\
&=
\sup_{z'\in Z}
\left(
-\alpha\|z'-z\|
-\inf_{x'\in X}\Phi(x',z')
\right)\\
&=
\sup_{z'\in Z}
\bigl(
-\alpha\|z'-z\|-v_\Phi(z')
\bigr)\\
&=
v_\Phi^\sharp(\alpha,z).
\end{align*}
\end{proof}

Although the partial conjugate \(\Phi^{\sharp_Z}\) depends on a centre
\(z\in Z\), Proposition~\ref{prop:dual_value_biconjugate_value_function}
shows that, for every fixed slope \(\alpha\), the complete expression
\[
-\Phi^{\sharp_Z}(\alpha,z)-\alpha\|z\|,
\]
is maximized at the nominal centre \(z=0_Z\). Thus the centre is redundant
for the optimal dual value, even though it remains a genuine parameter of
the partial conjugate itself.

\begin{definition}
\label{def:norm_cone_dual_problem}
The norm-cone dual problem associated with \(P(0_Z)\) is
\begin{equation}\label{eq:norm_cone_dual_problem}
\sup_{\alpha\in\R_+}
\bigl\{
-\Phi^{\sharp_Z}(\alpha,0_Z)
\bigr\},
\qquad
(D^{\sharp_Z}).
\end{equation}
Its optimal value is denoted by \(\sup D^{\sharp_Z}\).
\end{definition}

The following result shows that this reduced formulation is equivalent to
allowing arbitrary centres in the dual construction.

\begin{proposition}
\label{prop:dual_value_biconjugate_value_function}
Assume that \(v_\Phi\) is proper. Then, for every \(\alpha\ge0\),
\[
\sup_{z\in Z}
\bigl\{
-\Phi^{\sharp_Z}(\alpha,z)-\alpha\|z\|
\bigr\}
=
-\Phi^{\sharp_Z}(\alpha,0_Z).
\]
Consequently,
\[
\sup D^{\sharp_Z}
=
\sup_{(\alpha,z)\in\R_+\times Z}
\bigl\{
-\Phi^{\sharp_Z}(\alpha,z)-\alpha\|z\|
\bigr\}
=
v_\Phi^{\sharp\sharp}(0_Z).
\]
\end{proposition}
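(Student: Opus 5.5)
The plan is to reduce everything to the value function via Proposition~\ref{prop:partial_conjugate_value_function}. That result gives \(\Phi^{\sharp_Z}(\alpha,z)=v_\Phi^\sharp(\alpha,z)\) for all \((\alpha,z)\). The first claim then reads
\[
\sup_{z\in Z}\bigl\{-v_\Phi^\sharp(\alpha,z)-\alpha\|z\|\bigr\}=-v_\Phi^\sharp(\alpha,0_Z),
\]
which is exactly the inner supremum computed in the proof of Proposition~\ref{prop:fixed_center_biconjugate}, with the evaluation point equal to \(0_Z\) and \(\phi_\alpha(0_Z-z)=-\alpha\|z\|\).

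I will nevertheless redo that step directly, because Proposition~\ref{prop:fixed_center_biconjugate} assumes \(v_\Phi\in\Fnc(Z)\) and \(\alpha\in\slp(v_\Phi)\), while here \(v_\Phi\) is only proper. For any \(z,z'\in Z\) the triangle inequality gives \(-\alpha\|z'-z\|\ge -\alpha\|z'\|-\alpha\|z\|\). Taking the supremum over \(z'\) of \(-\alpha\|z'-z\|-v_\Phi(z')\) yields
\[
v_\Phi^\sharp(\alpha,z)\ge v_\Phi^\sharp(\alpha,0_Z)-\alpha\|z\|,
\]
that is, \(-v_\Phi^\sharp(\alpha,z)-\alpha\|z\|\le -v_\Phi^\sharp(\alpha,0_Z)\), and equality holds at \(z=0_Z\).

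The only care needed is extended-valued arithmetic. Properness of \(v_\Phi\) gives some \(z'\) with \(v_\Phi(z')<+\infty\), so \(v_\Phi^\sharp(\alpha,z)>-\infty\) everywhere, and subtracting the finite quantity \(\alpha\|z\|\) is legitimate. If \(\alpha\notin\slp(v_\Phi)\), the observation after Definition~\ref{def:norm_cone_minorized_functions} shows both sides equal \(-\infty\), so the identity holds trivially. I expect this bookkeeping to be the only real obstacle, and it is minor.

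For the consequence, I take the supremum over \(\alpha\ge0\) of the first identity. This gives \(\sup D^{\sharp_Z}=\sup_{(\alpha,z)}\{-\Phi^{\sharp_Z}(\alpha,z)-\alpha\|z\|\}\), directly from Definition~\ref{def:norm_cone_dual_problem}. The last expression equals \(\sup_{(\alpha,z)}\bigl(\phi_\alpha(0_Z-z)-v_\Phi^\sharp(\alpha,z)\bigr)=v_\Phi^{\sharp\sharp}(0_Z)\) by Definition~\ref{def:metric_conjugates}, again using Proposition~\ref{prop:partial_conjugate_value_function}. This route avoids invoking Proposition~\ref{prop:fixed_center_biconjugate}, so no assumption \(v_\Phi\in\Fnc(Z)\) is required. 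If \(\slp(v_\Phi)=\varnothing\), both sides are \(-\infty\), consistent with \(\sup\varnothing=-\infty\).
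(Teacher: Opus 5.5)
Your proposal is correct and follows essentially the paper's proof. It reduces to $v_\Phi^\sharp$ via Proposition~\ref{prop:partial_conjugate_value_function}, uses the triangle inequality to get $v_\Phi^\sharp(\alpha,z)\ge v_\Phi^\sharp(\alpha,0_Z)-\alpha\|z\|$ with equality at $z=0_Z$, and then takes the supremum over $\alpha$ and reads off $v_\Phi^{\sharp\sharp}(0_Z)$ from the definition of the biconjugate. Your extra bookkeeping for the extended-valued cases (e.g.\ $\alpha\notin\slp(v_\Phi)$ or $\slp(v_\Phi)=\varnothing$) is correct and simply makes explicit what the paper leaves implicit.
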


\begin{proof}
By Proposition~\ref{prop:partial_conjugate_value_function},
\[
\Phi^{\sharp_Z}(\alpha,z)
=
v_\Phi^\sharp(\alpha,z).
\]
For every \(u,z\in Z\), the triangle inequality gives
\[
-\alpha\|u-z\|
\ge
-\alpha\|u\|-\alpha\|z\|.
\]
Therefore,
\[
v_\Phi^\sharp(\alpha,z)
=
\sup_{u\in Z}
\bigl(
-\alpha\|u-z\|-v_\Phi(u)
\bigr)
\ge
v_\Phi^\sharp(\alpha,0_Z)-\alpha\|z\|.
\]
Hence
\[
-v_\Phi^\sharp(\alpha,z)-\alpha\|z\|
\le
-v_\Phi^\sharp(\alpha,0_Z).
\]
Equality is attained for \(z=0_Z\), and thus
\[
\sup_{z\in Z}
\bigl\{
-v_\Phi^\sharp(\alpha,z)-\alpha\|z\|
\bigr\}
=
-v_\Phi^\sharp(\alpha,0_Z).
\]
Using Proposition~\ref{prop:partial_conjugate_value_function} again gives the
first assertion. Taking the supremum over \(\alpha\ge0\), we obtain
\[
\sup D^{\sharp_Z}
=
\sup_{(\alpha,z)\in\R_+\times Z}
\bigl\{
-v_\Phi^\sharp(\alpha,z)-\alpha\|z\|
\bigr\}
=
v_\Phi^{\sharp\sharp}(0_Z),
\]
by the definition of the norm-cone biconjugate.
\end{proof}

A distinctive feature of the resulting duality scheme is its dimensional
reduction: although both the decision and perturbation spaces may be
vector-valued, or even infinite-dimensional, the effective norm-cone dual
problem is parametrized by a single real variable, namely the slope
\(\alpha\ge 0\).

\begin{example}
The preceding reduction turns the following nonconvex problem with two
inequality constraints into a scalar dual program. Define
\(f(x_1,x_2):=(x_1+x_2)^2+(x_1x_2-1)^2\) and
\(g(x_1,x_2):=(1-x_1x_2,x_1-x_2)\), and consider
\begin{equation}\label{eq:primal-example}
(P)\qquad
\inf\bigl\{f(x):g(x)\leq_{\mathbb R_+^2}0_{\mathbb R^2}\bigr\}.
\end{equation}
The objective is nonconvex, and so is the feasible set, since it contains
\((1,1)\) and \((-1,-1)\), but not their midpoint. The constraints are
\(x_1x_2\geq1\) and \(x_1\leq x_2\). Hence, on the feasible set,
\(f(x)\geq4x_1x_2+(x_1x_2-1)^2=(x_1x_2+1)^2\geq4\), with equality at
\((1,1)\) and \((-1,-1)\). Therefore \(\inf(P)=4\).

Let \(Z=\mathbb R^2\), endowed with the Euclidean norm and ordered by
\(\mathbb R_+^2\), and perturb both inequalities independently by setting
\[
\Phi(x,z):=
\begin{cases}
f(x), & g(x)\leq_{\mathbb R_+^2}z,\\
+\infty, & \text{otherwise}.
\end{cases}
\]
Thus the perturbation variable \(z=(z_1,z_2)\) is two-dimensional. By
Proposition~\ref{prop:dual_value_biconjugate_value_function}, however, the
norm-cone dual has the scalar form
\begin{equation}\label{eq:dual-example}
(D^{\sharp_Z})\qquad
\sup_{\alpha\geq0}\bigl\{-\Phi^{\sharp_Z}(\alpha,0_Z)\bigr\}.
\end{equation}
For fixed \(x\), the minimum of \(\|z\|_2\) subject to \(g(x)\leq z\) is
\(\|g(x)_+\|_2\), where the positive part is taken componentwise.
Consequently,
\begin{equation}\label{eq:penalty-representation-example}
-\Phi^{\sharp_Z}(\alpha,0_Z)
=
\inf_{x\in\mathbb R^2}
\left\{
f(x)+\alpha
\sqrt{(1-x_1x_2)_+^2+(x_1-x_2)_+^2}
\right\}.
\end{equation}

Since \(f\) and \(x_1x_2\) are invariant under permutation of the coordinates,
the infimum may be restricted to \(x_1\leq x_2\); on this set,
\((x_1-x_2)_+=0\). Setting \(t=x_1x_2-1\), standard minimization gives
\[
q(t):=
\inf_{\substack{x_1\leq x_2\\x_1x_2=1+t}}f(x_1,x_2)
=
\begin{cases}
t^2, & t\leq-1,\\
(t+2)^2, & t\geq-1.
\end{cases}
\]
It follows that
\(-\Phi^{\sharp_Z}(\alpha,0_Z)
=\inf_{t\in\mathbb R}\{q(t)+\alpha(-t)_+\}\).
Elementary one-variable minimization yields
\begin{equation}\label{eq:dual-objective-example}
-\Phi^{\sharp_Z}(\alpha,0_Z)
=
\begin{cases}
1+\alpha, & 0\leq\alpha\leq2,\\[1mm]
\displaystyle
4-\frac{(4-\alpha)^2}{4}, & 2\leq\alpha\leq4,\\[2mm]
4, & \alpha\geq4.
\end{cases}
\end{equation}
Therefore \(\sup(D^{\sharp_Z})=4=\inf(P)\) and
\(\operatorname{Sol}(D^{\sharp_Z})=[4,+\infty)\). Thus \(\alpha^*=4\) is
both the smallest optimal dual slope and, by
\eqref{eq:penalty-representation-example}, the least exact distance-penalty
parameter. Hence a nonconvex problem with two inequality constraints and a
genuinely two-dimensional perturbation gives rise to a norm-cone dual whose
only effective variable is the scalar slope \(\alpha\geq0\).
\end{example}

\begin{theorem}
\label{thm:weak_norm_cone_duality}
One has
\[
\sup D^{\sharp_Z}
\leq
\inf_{x\in\Omega}f(x).
\]
\end{theorem}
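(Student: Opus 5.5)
The plan is to prove the inequality directly from the definition of the partial conjugate, by a Fenchel--Young argument in the spirit of Proposition~\ref{prop:fenchel_young_metric}. This avoids needing \(v_\Phi\) to be proper. Fix \(\alpha\in\R_+\) and \(x\in\Omega\). In the supremum \eqref{eq:global_norm_cone_conjugate_perturbation} defining \(\Phi^{\sharp_Z}(\alpha,0_Z)\), we take the particular pair \((x',z')=(x,0_Z)\). This gives
\[
\Phi^{\sharp_Z}(\alpha,0_Z)\ \ge\ -\alpha\|0_Z-0_Z\|-\Phi(x,0_Z)=-f(x),
\]
where the last equality uses the consistency condition \eqref{eq:perturbation_consistency}. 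Equivalently, \(-\Phi^{\sharp_Z}(\alpha,0_Z)\le f(x)\).

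Next we take the infimum over \(x\in\Omega\), which yields \(-\Phi^{\sharp_Z}(\alpha,0_Z)\le\inf_{x\in\Omega}f(x)\) for every \(\alpha\ge0\). Taking the supremum over \(\alpha\in\R_+\) then gives \(\sup D^{\sharp_Z}\le\inf_{x\in\Omega}f(x)\), by Definition~\ref{def:norm_cone_dual_problem}.

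The only point that needs care is the extended-real arithmetic. If \(f(x)=+\infty\), the inequality is trivial. If \(\Phi^{\sharp_Z}(\alpha,0_Z)=+\infty\), the corresponding dual term is \(-\infty\) and does not matter. The paper's standing convention that \(+\infty-\infty\) never occurs is respected, because \(\Phi(x,0_Z)=f(x)\) is never \(-\infty\) under properness.

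As an alternative when \(v_\Phi\) is proper, we could combine Proposition~\ref{prop:dual_value_biconjugate_value_function}, which gives \(\sup D^{\sharp_Z}=v_\Phi^{\sharp\sharp}(0_Z)\), with item~(2) of Proposition~\ref{prop:basic_properties_metric} and with \eqref{eq:v0_equals_primal_value}. However, item~(2) is stated for functions in \(\Fnc\), so the direct argument above is the one to use. I expect no real obstacle; the argument is a one-line evaluation of the supremum.
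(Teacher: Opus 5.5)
Your proposal is correct and is essentially the paper's proof: the paper also evaluates the supremum defining \(\Phi^{\sharp_Z}(\alpha,0_Z)\) at \((x,0_Z)\) and then takes the infimum over \(x\) followed by the supremum over \(\alpha\). The only cosmetic difference is that the paper ranges over all \(x\in X\) and passes through \(v_\Phi(0_Z)=\inf_{x\in\Omega}f(x)\), whereas you restrict to \(x\in\Omega\) directly; this is equivalent because \(\Phi(\cdot,0_Z)=+\infty\) off \(\Omega\).
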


\begin{proof}
Let \(\alpha\ge0\). By the definition of the partial norm-cone conjugate,
for every \(x\in X\),
\[
\Phi^{\sharp_Z}(\alpha,0_Z)
\ge
-\Phi(x,0_Z).
\]
Hence
\[
-\Phi^{\sharp_Z}(\alpha,0_Z)
\le
\inf_{x\in X}\Phi(x,0_Z)
=
v_\Phi(0_Z)
=
\inf_{x\in\Omega}f(x).
\]
Taking the supremum over \(\alpha\ge0\) yields
\[
\sup D^{\sharp_Z}
\le
\inf_{x\in\Omega}f(x).
\]
\end{proof}

\begin{corollary}
\label{cor:strong_duality_pointwise_biconjugation}
Assume that \(v_\Phi\) is proper and \(v_\Phi(0_Z)\in\mathbb R\).
Then
\[
\sup D^{\sharp_Z}
=
\inf_{x\in\Omega}f(x)
\quad\Longleftrightarrow\quad
v_\Phi^{\sharp\sharp}(0_Z)
=
v_\Phi(0_Z).
\]
Thus strong norm-cone duality is equivalent to exact norm-cone
biconjugation of the value function at the nominal perturbation \(0_Z\).
\end{corollary}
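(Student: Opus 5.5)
This is a direct combination of Proposition~\ref{prop:dual_value_biconjugate_value_function} with the identity \eqref{eq:v0_equals_primal_value}. The plan is to rewrite both sides of the claimed equivalence in terms of the value function, after which they coincide.

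First, since \(v_\Phi\) is assumed proper, Proposition~\ref{prop:dual_value_biconjugate_value_function} applies and gives
\[
\sup D^{\sharp_Z}=v_\Phi^{\sharp\sharp}(0_Z).
\]
Second, by \eqref{eq:v0_equals_primal_value}, which follows from the consistency condition \eqref{eq:perturbation_consistency} and the convention \(\Phi(x,0_Z)=+\infty\) off \(\Omega\), one has
\[
\inf_{x\in\Omega}f(x)=v_\Phi(0_Z).
\]
Substituting these two identities into the left-hand equation \(\sup D^{\sharp_Z}=\inf_{x\in\Omega}f(x)\) turns it literally into \(v_\Phi^{\sharp\sharp}(0_Z)=v_\Phi(0_Z)\). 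Both implications therefore hold at once.

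No step is a genuine obstacle. The only point needing care is that every quantity is well defined, so that no indeterminate form appears in the comparison. The assumption \(v_\Phi(0_Z)\in\mathbb R\) makes the primal value finite. The dual value is a supremum in \(\overline{\mathbb R}\), and an equality between two extended reals needs no arithmetic.

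I would also remark that Theorem~\ref{thm:weak_norm_cone_duality}, equivalently Proposition~\ref{prop:basic_properties_metric}(2) applied to \(v_\Phi\), always gives \(v_\Phi^{\sharp\sharp}(0_Z)\le v_\Phi(0_Z)\). Hence the equivalence can be read as saying that the duality gap
\[
v_\Phi(0_Z)-v_\Phi^{\sharp\sharp}(0_Z)\ge0
\]
vanishes exactly when the norm-cone biconjugate is exact at \(0_Z\). This remark is not needed for the proof.
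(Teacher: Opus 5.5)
Your proposal is correct and is essentially the paper's own proof: Proposition~\ref{prop:dual_value_biconjugate_value_function} gives \(\sup D^{\sharp_Z}=v_\Phi^{\sharp\sharp}(0_Z)\), and \eqref{eq:v0_equals_primal_value} gives \(v_\Phi(0_Z)=\inf_{x\in\Omega}f(x)\), so the two equalities in the statement coincide. In your optional side remark, Proposition~\ref{prop:basic_properties_metric}(2) is formally stated only for \(v_\Phi\in\mathcal F_{\rm nc}(Z)\), whereas Theorem~\ref{thm:weak_norm_cone_duality} gives the inequality without that assumption.
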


\begin{proof}
By Proposition~\ref{prop:dual_value_biconjugate_value_function},
\[
\sup D^{\sharp_Z}
=
v_\Phi^{\sharp\sharp}(0_Z),
\]
whereas
\[
v_\Phi(0_Z)
=
\inf_{x\in\Omega}f(x)
\]
by~\eqref{eq:v0_equals_primal_value}.
\end{proof}

\begin{remark}
\label{rem:weak_conjugacy_dual_value}
Whenever \(v_\Phi\in\Fnc(Z)\), Remark~\ref{rem:weak_conjugacy_biconjugate}
gives
\[
v_\Phi^{ww}(0_Z)
=
v_\Phi^{\sharp\sharp}(0_Z)
=
\sup D^{\sharp_Z}.
\]
Thus the weak-conjugacy dual of \cite{DincYalcinKasimbeyli2020} and the
present norm-cone dual have the same optimal value on this class, although
their parametrizations and sets of dual solutions need not coincide. In the
present construction the effective dual parameter reduces to the radial
slope.

\end{remark}

\subsection{Strong Norm-cone Duality from Lower Lipschitz Bounds}
\label{subsec:strong_norm_cone_duality_uniform_lower_lipschitz}
Whenever \(v_\Phi\) is proper,
Corollary~\ref{cor:strong_duality_pointwise_biconjugation}
shows that strong norm-cone duality is equivalent to exact biconjugation
of the value function at \(0_Z\). A sufficient condition for this
pointwise exactness is the existence of a norm-cone support of
\(v_\Phi\) at \(0_Z\). We next obtain such a support from a uniform
lower Lipschitz estimate on the perturbation function.

\begin{definition}
\label{def:ULL}
We say that \(\Phi\) is uniformly lower Lipschitz at \(0_Z\) in the perturbation
variable if there exists \(\alpha\ge0\) such that
\begin{equation}\label{eq:ULL}
\Phi(x,z)\ge \Phi(x,0_Z)-\alpha\|z\|,
\qquad \forall (x,z)\in X\times Z.
\end{equation}
\end{definition}

\begin{theorem}
\label{thm:strong_norm_cone_duality_ULL}
Assume that \(v_\Phi(0_Z)\in\R\). If \(\Phi\) is uniformly lower Lipschitz
at \(0_Z\) in the perturbation variable, then
\begin{equation}\label{eq:strong_norm_cone_duality}
\inf_{x\in\Omega}f(x)
=
\sup D^{\sharp_Z}
=
\sup_{\beta\in\R_+}
\bigl\{
-\Phi^{\sharp_Z}(\beta,0_Z)
\bigr\}.
\end{equation}
\end{theorem}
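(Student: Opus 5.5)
The plan is to show that $\alpha$, the constant in \eqref{eq:ULL}, belongs to $\partial^\sharp v_\Phi(0_Z)$. This gives a norm-cone support of $v_\Phi$ at $0_Z$ with vertex $(0_Z,v_\Phi(0_Z))$. Pointwise exact biconjugation then follows, and hence strong duality.

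\emph{Step 1: the support inequality.} Fix the $\alpha\ge0$ of Definition~\ref{def:ULL} and any $z\in Z$. For every $x\in X$, \eqref{eq:ULL} gives
\[
\Phi(x,z)\ge\Phi(x,0_Z)-\alpha\|z\|\ge v_\Phi(0_Z)-\alpha\|z\|.
\]
Taking the infimum over $x$ yields
\[
v_\Phi(z)\ge v_\Phi(0_Z)-\alpha\|z\|,\qquad z\in Z.
\]
Since $v_\Phi(0_Z)\in\R$, this says exactly that $\alpha\in\partial^\sharp v_\Phi(0_Z)$.

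\emph{Step 2: properness and membership in $\Fnc(Z)$.} The bound in Step 1 shows $v_\Phi(z)>-\infty$ for every $z$. Also $\dom v_\Phi\ni 0_Z$ is nonempty. So $v_\Phi$ is proper, which is the hypothesis needed in Propositions~\ref{prop:partial_conjugate_value_function} and \ref{prop:dual_value_biconjugate_value_function}. The same bound is a global lower estimate of the form $r-\alpha\|z\|$ with $r=v_\Phi(0_Z)$, so $v_\Phi\in\Fnc(Z)$ by Proposition~\ref{prop:nc_minorized}.

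\emph{Step 3: exactness and strong duality.} The function $K_{0_Z,\alpha,v_\Phi(0_Z)}$ is a norm-cone minorant of $v_\Phi$ that touches it at $0_Z$. Corollary~\ref{cor:pointwise_exactness_norm_cone_support} then gives $v_\Phi^{\sharp\sharp}(0_Z)=v_\Phi(0_Z)$. Corollary~\ref{cor:strong_duality_pointwise_biconjugation}, together with \eqref{eq:v0_equals_primal_value}, then yields $\sup D^{\sharp_Z}=\inf_\Omega f$. The second equality in \eqref{eq:strong_norm_cone_duality} is just Definition~\ref{def:norm_cone_dual_problem}.

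Alternatively, one can argue directly: by Proposition~\ref{prop:partial_conjugate_value_function} and Lemma~\ref{lem:height_structure},
\[
-\Phi^{\sharp_Z}(\alpha,0_Z)=-v_\Phi^\sharp(\alpha,0_Z)\ge v_\Phi(0_Z),
\]
and weak duality (Theorem~\ref{thm:weak_norm_cone_duality}) gives the reverse inequality. This route also shows that the supremum is attained at $\beta=\alpha$.

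The only delicate point is Step 2. Properness of $v_\Phi$ is not stated as a hypothesis, so it must be derived from \eqref{eq:ULL} before any of the earlier propositions are invoked. The rest is routine.
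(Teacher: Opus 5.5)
Your proof is correct and follows the paper's argument essentially step for step. You take the infimum in \eqref{eq:ULL} to get \(v_\Phi(z)\ge v_\Phi(0_Z)-\alpha\|z\|\), read off properness, \(v_\Phi\in\mathcal F_{\rm nc}(Z)\) and \(\alpha\in\partial^\sharp v_\Phi(0_Z)\), and conclude via Corollary~\ref{cor:pointwise_exactness_norm_cone_support} and the identity \(\sup D^{\sharp_Z}=v_\Phi^{\sharp\sharp}(0_Z)\). Your alternative route through Lemma~\ref{lem:height_structure} and weak duality is also valid, and it additionally shows that the dual supremum is attained at \(\beta=\alpha\), which the paper does not record.
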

\begin{proof}
Let \(\alpha\geq0\) satisfy~\eqref{eq:ULL}. Taking the infimum over
\(x\in X\) gives

\[
v_\Phi(z)
\geq
v_\Phi(0_Z)-\alpha\|z\|,
\qquad z\in Z.
\]

Since \(v_\Phi(0_Z)\in\mathbb R\), this estimate implies that
\(v_\Phi\) is proper, belongs to \(\mathcal F_{\rm nc}(Z)\), and
\[
\alpha\in\partial^\sharp v_\Phi(0_Z).
\]

Hence \(v_\Phi\) admits at \(0_Z\) the norm-cone support

\[
z\longmapsto v_\Phi(0_Z)-\alpha\|z\|.
\]

Now, by Corollary~\ref{cor:pointwise_exactness_norm_cone_support}, we have

\[
v_\Phi^{\sharp\sharp}(0_Z)=v_\Phi(0_Z).
\]

By Proposition~\ref{prop:dual_value_biconjugate_value_function},

\[
\sup D^{\sharp_Z}
=
v_\Phi^{\sharp\sharp}(0_Z)
=
v_\Phi(0_Z)
=
\inf_{x\in\Omega}f(x),
\]

which proves the result.
\end{proof}

\begin{remark}
\label{rem:rockafellar_norm_cone_duality}
In classical perturbation duality, dual bounds arise from linear supports
of the value function. Here they arise from radial supports
\[
z\longmapsto v_\Phi(0_Z)-\alpha\|z\|,
\]
with the slope \(\alpha\) as the effective dual parameter.
\end{remark}

\section{Distance Penalization and Norm-cone Duality}
\label{sec:distance_penalization_norm_cone_duality}
We now apply the abstract norm-cone duality framework developed in the
preceding section to conic inequality problems.  The optimization significance of the scalar dual parameter becomes especially
transparent in this setting, where the same slope admits a direct
interpretation as an exact distance-penalty parameter. The main objective is to
connect the natural value function associated with the constraint
perturbation to exact distance penalization, norm-cone support at the
nominal perturbation, and global error-bound conditions. This will provide
both a characterization of exact penalty parameters and verifiable
sufficient conditions for strong norm-cone duality.

Let \((X,\|\cdot\|)\) and \((Z,\|\cdot\|)\) be real normed spaces, let
\(\Omega\subset X\) be nonempty, and let \(Z_+\subset Z\) be a nonempty closed
cone. Let \(f:\Omega\to\R\) and \(g:\Omega\to Z\). We consider the conic
inequality problem
\begin{equation}\label{eq:P0_conic}
\inf\{f(x):x\in\Omega,\ g(x)\in -Z_+\},
\qquad (P(0_Z)).
\end{equation}
Its feasible set is
\[
\mathcal F:=\{x\in\Omega:\ g(x)\in -Z_+\}.
\]

The natural perturbation associated with
\eqref{eq:P0_conic} is obtained by shifting the target cone. Define
\[
\Phi:X\times Z\to\overR
\]
by
\begin{equation}\label{eq:conic_perturbation_function}
\Phi(x,z)
:=
\begin{cases}
f(x), & x\in\Omega,\ g(x)\in z-Z_+,\\
+\infty, & \text{otherwise}.
\end{cases}
\end{equation}
Its associated value function is
\begin{equation}\label{eq:value_function_conic_Phi}
v_\Phi(z)
:=
\inf_{x\in X}\Phi(x,z)
=
\inf\{f(x):x\in\Omega,\ g(x)\in z-Z_+\},
\qquad z\in Z.
\end{equation}
In particular,
\[
v_\Phi(0_Z)
=
\inf_{x\in\mathcal F}f(x).
\]

The abstract duality theory developed in
Section~\ref{sec:dual_norm_cone_optimization} can be applied directly to
\(\Phi\). The corresponding partial norm-cone conjugate is
\[
\Phi^{\sharp_Z}:\R_+\times Z\to\overR,
\]
defined by
\begin{equation}\label{eq:Phi_sharp_Z_conic}
\Phi^{\sharp_Z}(\alpha,z)
:=
\sup_{(x',z')\in X\times Z}
\bigl(
-\alpha\|z'-z\|-\Phi(x',z')
\bigr).
\end{equation}
The associated norm-cone dual problem, in the reduced form established in
Proposition~\ref{prop:dual_value_biconjugate_value_function}, is
\begin{equation}\label{eq:D_direct_conic}
\sup_{\alpha\in\R_+}
\bigl\{
-\Phi^{\sharp_Z}(\alpha,0_Z)
\bigr\},
\qquad
(D^{\sharp_Z}).
\end{equation}

However, the perturbation \(\Phi\) is typically singular in the perturbation
variable \(z\), since it takes the value \(+\infty\) outside the shifted
constraint system. Consequently, the uniform lower Lipschitz condition
required by the abstract strong duality theorem may fail even in simple
situations. This motivates the use of distance penalization, which we now
relate to the original constraint perturbation.

\begin{definition}
\label{def:exact_distance_penalty}
A number \(\lambda\geq0\) is called an \emph{exact distance penalty
parameter} for problem~\eqref{eq:P0_conic} if
\begin{equation}
\label{eq:exact_penalty_conic}
\inf_{x\in\Omega}
\bigl(f(x)+\lambda d_{-Z_+}(g(x))\bigr)=\inf_{x\in\mathcal F}f(x).
\end{equation}
We say that problem~\eqref{eq:P0_conic} admits an exact distance
penalization if it has at least one exact distance penalty parameter.
\end{definition}

The perturbation variable \(z\) shifts the target cone. Penalizing this
perturbation by \(\lambda\|z\|\) is equivalent to penalizing the original
constraint violation by the distance to \(-Z_+\).

\begin{lemma}
\label{lem:distance_representation_penalized_value}
For every \(\lambda\geq0\),
\begin{equation}
\label{eq:penalty_as_norm_cone_dual_value}
\inf_{z\in Z}
\bigl(v_\Phi(z)+\lambda\|z\|\bigr)=\inf_{x\in\Omega}
\bigl(f(x)+\lambda d_{-Z_+}(g(x))\bigr).
\end{equation}
If \(v_\Phi(0_Z)\in\mathbb R\), then \(\lambda\) is an exact distance
penalty parameter if and only if
\begin{equation}
\label{eq:exact_penalty_value_support}
v_\Phi(z)
\geq
v_\Phi(0_Z)-\lambda\|z\|,
\qquad z\in Z.
\end{equation}
\end{lemma}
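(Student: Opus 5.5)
The plan is to prove the identity \eqref{eq:penalty_as_norm_cone_dual_value} by unfolding $v_\Phi$ and exchanging the two infima. Then we read off the exactness criterion from it together with the trivial inequality at $z=0_Z$.

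\emph{The identity.} By \eqref{eq:value_function_conic_Phi},
\[
\inf_{z\in Z}\bigl(v_\Phi(z)+\lambda\|z\|\bigr)
=\inf\bigl\{f(x)+\lambda\|z\|:\ x\in\Omega,\ z\in Z,\ g(x)\in z-Z_+\bigr\}.
\]
Here one uses that adding the finite quantity $\lambda\|z\|$ commutes with the infimum defining $v_\Phi(z)$. If the defining set is empty for some $z$, then $v_\Phi(z)=+\infty$, and that $z$ contributes nothing; the convention $\inf\varnothing=+\infty$ is consistent with this. Next, exchange the order and minimize over $z$ for fixed $x\in\Omega$. The constraint $g(x)\in z-Z_+$ means $z=g(x)+k$ with $k\in Z_+$. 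Hence
\[
\inf\{\|z\|: g(x)\in z-Z_+\}=\inf_{k\in Z_+}\|g(x)+k\|=\inf_{w\in -Z_+}\|g(x)-w\|=d_{-Z_+}(g(x)).
\]
Multiplying by $\lambda\ge0$ and adding the constant $f(x)\in\R$ gives the inner value $f(x)+\lambda d_{-Z_+}(g(x))$. The case $\lambda=0$ is harmless, since the constraint set in $z$ is always nonempty (take $z=g(x)$). Taking the infimum over $x\in\Omega$ yields \eqref{eq:penalty_as_norm_cone_dual_value}. This is the only step needing care: the exchange of infima, and the translation of the set $\{z:g(x)\in z-Z_+\}$ into $g(x)+Z_+$. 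It is routine, since all quantities lie in $(-\infty,+\infty]$ and no indeterminate expressions appear.

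\emph{The characterization.} Assume $v_\Phi(0_Z)\in\R$, and recall that $v_\Phi(0_Z)=\inf_{x\in\mathcal F}f(x)$. Taking $z=0_Z$ gives
\[
\inf_{z}\bigl(v_\Phi(z)+\lambda\|z\|\bigr)\le v_\Phi(0_Z),
\]
so the penalized value never exceeds the constrained value. Therefore $\lambda$ is exact in the sense of \eqref{eq:exact_penalty_conic} if and only if, by the identity, $\inf_z(v_\Phi(z)+\lambda\|z\|)\ge v_\Phi(0_Z)$. This holds if and only if $v_\Phi(z)+\lambda\|z\|\ge v_\Phi(0_Z)$ for every $z\in Z$, which is exactly \eqref{eq:exact_penalty_value_support}. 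Rearranging is legitimate because $v_\Phi(0_Z)$ and $\lambda\|z\|$ are finite, while $v_\Phi(z)$ may equal $+\infty$, in which case the inequality is trivial. I expect no real obstacle beyond this bookkeeping.
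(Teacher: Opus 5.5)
Your proposal is correct and follows essentially the same route as the paper: you unfold $v_\Phi$, exchange the infima, identify the inner infimum over $z\in g(x)+Z_+$ with $d_{-Z_+}(g(x))$, and then use the admissibility of $z=0_Z$ to reduce exactness to the pointwise support inequality. One harmless imprecision is your claim that all quantities lie in $(-\infty,+\infty]$, since $v_\Phi(z)$ may equal $-\infty$; this does not affect the argument, because $\lambda\|z\|$ is finite and so no indeterminate expression arises.
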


\begin{proof}
Using~\eqref{eq:value_function_conic_Phi},
\begin{align*}
\inf_{z\in Z}\bigl(v_\Phi(z)+\lambda\|z\|\bigr)
&=
\inf_{z\in Z}
\inf_{\substack{x\in\Omega\\ g(x)\in z-Z_+}}
\bigl(f(x)+\lambda\|z\|\bigr)\\
&=
\inf_{x\in\Omega}
\left(
f(x)+
\lambda
\inf_{\substack{z\in Z\\ g(x)\in z-Z_+}}
\|z\|
\right).
\end{align*}
For fixed \(x\in\Omega\), the condition
\(g(x)\in z-Z_+\) is equivalent to \(z\in g(x)+Z_+\). Hence

\[
\inf_{\substack{z\in Z\\ g(x)\in z-Z_+}}\|z\|
=
\inf_{z\in g(x)+Z_+}\|z\|
=
\inf_{w\in -Z_+}\|g(x)-w\|
=
d_{-Z_+}(g(x)),
\]

which proves~\eqref{eq:penalty_as_norm_cone_dual_value}.

Now assume that \(v_\Phi(0_Z)\in\mathbb R\). Since

\[
v_\Phi(0_Z)=\inf_{x\in\mathcal F}f(x),
\]

Definition~\ref{def:exact_distance_penalty} and
\eqref{eq:penalty_as_norm_cone_dual_value} show that \(\lambda\) is an
exact distance penalty parameter if and only if

\[
\inf_{z\in Z}
\bigl(v_\Phi(z)+\lambda\|z\|\bigr)
=
v_\Phi(0_Z).
\]

Since \(z=0_Z\) is admissible in the infimum, the left-hand side never
exceeds \(v_\Phi(0_Z)\). Therefore equality holds if and only if

\[
v_\Phi(z)+\lambda\|z\|
\geq
v_\Phi(0_Z),
\qquad z\in Z,
\]

which is precisely~\eqref{eq:exact_penalty_value_support}.
\end{proof}

Notice also that exactness is monotone in the penalty parameter: if
\(\bar\lambda\) is exact, then every \(\lambda\ge\bar\lambda\) is exact,
since
\[
f(x)+\lambda d_{-Z_+}(g(x))
\ge
f(x)+\bar\lambda d_{-Z_+}(g(x)).
\]

\begin{example}
\label{ex:direct_vs_penalized_perturbation}
Let

\[
X=Z=\R,\qquad Z_+=\R_+,\qquad \Omega=[-1,1],
\]

and define

\[
f(x):=-x,
\qquad
g(x):=x.
\]

The constraint \(g(x)\in -Z_+\) is simply \(x\le0\). Hence

\[
\mathcal F=[-1,0],
\qquad
\inf_{x\in\mathcal F}f(x)=0.
\]

\emph{(i) Failure of the direct perturbation.}
The natural constraint perturbation is

\[
\Phi(x,z)
:=
\begin{cases}
-x, & x\in[-1,1],\ x\le z,\\
+\infty, & \text{otherwise}.
\end{cases}
\]

It does not satisfy the uniform lower Lipschitz condition. Indeed,

\[
\Phi(1/2,1/2)=-1/2,
\qquad
\Phi(1/2,0)=+\infty,
\]

so there is no \(\alpha\ge0\) such that

\[
\Phi(x,z)\ge \Phi(x,0)-\alpha\|z\|,
\qquad (x,z)\in X\times Z.
\]

\emph{(ii) Distance penalization and regularization.}
Since

\[
d_{-\R_+}(u)=u_+,
\]

the distance-penalized objective at the nominal perturbation is

\[
f(x)+\lambda d_{-\R_+}(g(x))
=
-x+\lambda x_+.
\]

For \(x\le0\), this quantity equals \(-x\ge0\), whereas for \(x>0\)
it equals \((\lambda-1)x\). Consequently,

\[
\inf_{x\in[-1,1]}
\bigl(-x+\lambda x_+\bigr)
=
0
=
\inf_{x\in\mathcal F}f(x),
\qquad \lambda\ge1.
\]

On the other hand, if \(0\le\lambda<1\), evaluating at \(x=1\) gives

\[
-1+\lambda<0
=
\inf_{x\in\mathcal F}f(x).
\]

Thus the distance penalization is exact if and only if
\(\lambda\ge1\), and its least exact penalty parameter is

\[
\lambda_*=1.
\]

This threshold also has a direct interpretation in terms of the
norm-cone geometry of the natural value function. Indeed,

\[
v_\Phi(z)
=
\inf\{-x:x\in[-1,1],\ x\leq z\}
=
\begin{cases}
+\infty, & z<-1,\\
-z, & -1\leq z\leq1,\\
-1, & z\geq1.
\end{cases}
\]

Since \(v_\Phi(0)=0\), the condition

\[
v_\Phi(z)\geq v_\Phi(0)-\lambda|z|,
\qquad z\in\mathbb R,
\]

holds if and only if \(\lambda\geq1\). Therefore

\[
\partial^\sharp v_\Phi(0)=[1,+\infty)
\qquad\text{and}\qquad
|\partial^\sharp v_\Phi|(0)=1=\lambda_*.
\]

Thus, in this example, the least exact distance penalty parameter
coincides with the minimal norm-cone support slope of the natural value
function at the nominal perturbation.

For \(\lambda\ge0\), consider now the corresponding penalized perturbation

\[
\Phi_\lambda(x,z)
:=
\begin{cases}
-x+\lambda(x-z)_+, & x\in[-1,1],\\
+\infty, & \text{otherwise}.
\end{cases}
\]

Since \(u\mapsto u_+\) is \(1\)-Lipschitz,

\[
(x-z')_+
\ge
(x-z)_+-|z'-z|,
\]

and hence

\[
\Phi_\lambda(x,z')
\ge
\Phi_\lambda(x,z)-\lambda|z'-z|,
\qquad x\in[-1,1],\quad z,z'\in\R.
\]

Thus \(\Phi_\lambda\) satisfies the uniform lower Lipschitz condition
with constant \(\lambda\). Hence distance penalization provides a
regularized perturbation to which
Theorem~\ref{thm:strong_norm_cone_duality_ULL} applies directly.
Notice, however, that failure of the uniform lower Lipschitz condition
for the original perturbation \(\Phi\) does not by itself imply failure
of strong duality for the direct norm-cone dual problem; as shown below,
the decisive property can instead be expressed in terms of norm-cone
supportability of the associated value function.
\end{example}

The preceding example exhibits the two roles of distance penalization that
will be used below: preservation of the primal value under exactness and
regularization of the perturbation variable. Motivated by this construction,
for \(\lambda\ge0\) define the distance-penalized perturbation

\[
\Phi_\lambda:X\times Z\to\overR
\]

by
\begin{equation}\label{eq:Perturbation_Phi_lambda}
\Phi_\lambda(x,z)
:=
\begin{cases}
f(x)+\lambda d_{-Z_+}(g(x)-z), & x\in\Omega,\\
+\infty, & x\notin\Omega.
\end{cases}
\end{equation}

Its associated value function is
\begin{equation}\label{eq:value_function_conic_Phi_lambda}
v_{\Phi_\lambda}(z)
:=
\inf_{x\in X}\Phi_\lambda(x,z),
\qquad z\in Z.
\end{equation}
In particular,
\[
v_{\Phi_\lambda}(0_Z)
=
\inf_{x\in\Omega}
\bigl(f(x)+\lambda d_{-Z_+}(g(x))\bigr).
\]

The corresponding partial norm-cone conjugate is
\[
\Phi_\lambda^{\sharp_Z}:\R_+\times Z\to\overR,
\]
defined by
\begin{equation}\label{eq:Phi_lambda_sharp_Z_conic}
\Phi_\lambda^{\sharp_Z}(\alpha,z)
:=
\sup_{(x',z')\in X\times Z}
\bigl(
-\alpha\|z'-z\|-\Phi_\lambda(x',z')
\bigr).
\end{equation}
The associated norm-cone dual problem is
\begin{equation}\label{eq:D_lambda_conic}
\sup_{\alpha\in\R_+}
\bigl\{
-\Phi_\lambda^{\sharp_Z}(\alpha,0_Z)
\bigr\},
\qquad
(D_\lambda^{\sharp_Z}).
\end{equation}
Its optimal value is denoted by \(\sup D_\lambda^{\sharp_Z}\).

\subsection{Weak Norm-cone Duality}
\label{subsec:weak_norm_cone_duality_distance}

Weak duality is inherited from the abstract perturbation theorem applied to
\(\Phi_\lambda\).

\begin{theorem}
\label{thm:weak_norm_cone_duality_distance}
For every \(\lambda\ge0\),
\[
\sup D_\lambda^{\sharp_Z}
\le
v_{\Phi_\lambda}(0_Z)
=
\inf_{x\in\Omega}
\bigl(f(x)+\lambda d_{-Z_+}(g(x))\bigr)
\le
\inf_{x\in\mathcal F}f(x).
\]
\end{theorem}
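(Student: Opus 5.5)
The chain of inequalities in Theorem~\ref{thm:weak_norm_cone_duality_distance} splits into three pieces, each handled separately.

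\emph{First inequality.} We apply the abstract weak duality result, Theorem~\ref{thm:weak_norm_cone_duality}, to the perturbation \(\Phi_\lambda\). Its proof uses only the definition of the partial conjugate: for every \(\alpha\ge0\) and \(x\in X\),
\[
\Phi_\lambda^{\sharp_Z}(\alpha,0_Z)\ge -\Phi_\lambda(x,0_Z).
\]
This follows by taking \((x',z')=(x,0_Z)\) in \eqref{eq:Phi_lambda_sharp_Z_conic}. It needs no properness of \(v_{\Phi_\lambda}\), and the consistency condition \eqref{eq:perturbation_consistency} is not required either, since we compare directly with \(v_{\Phi_\lambda}(0_Z)=\inf_x\Phi_\lambda(x,0_Z)\). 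Rearranging gives \(-\Phi_\lambda^{\sharp_Z}(\alpha,0_Z)\le \inf_{x\in X}\Phi_\lambda(x,0_Z)=v_{\Phi_\lambda}(0_Z)\). Taking the supremum over \(\alpha\ge0\) yields \(\sup D_\lambda^{\sharp_Z}\le v_{\Phi_\lambda}(0_Z)\).

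\emph{Middle equality.} By definition \eqref{eq:Perturbation_Phi_lambda}, \(\Phi_\lambda(x,0_Z)=f(x)+\lambda d_{-Z_+}(g(x))\) for \(x\in\Omega\) and \(+\infty\) otherwise. Hence \(v_{\Phi_\lambda}(0_Z)\) equals the infimum over \(\Omega\) of the penalized objective, as already recorded after \eqref{eq:value_function_conic_Phi_lambda}.

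\emph{Last inequality.} We restrict the infimum over \(\Omega\) to \(\mathcal F\subset\Omega\). For \(x\in\mathcal F\) we have \(g(x)\in -Z_+\), so \(d_{-Z_+}(g(x))=0\), and the penalized objective coincides with \(f(x)\). Thus
\[
\inf_{x\in\Omega}\bigl(f(x)+\lambda d_{-Z_+}(g(x))\bigr)\le\inf_{x\in\mathcal F}\bigl(f(x)+\lambda d_{-Z_+}(g(x))\bigr)=\inf_{x\in\mathcal F}f(x).
\]
If \(\mathcal F=\varnothing\), the right-hand side is \(+\infty\) and the inequality is trivial. Alternatively, this step follows from Lemma~\ref{lem:distance_representation_penalized_value} by taking \(z=0_Z\) in the infimum \(\inf_z(v_\Phi(z)+\lambda\|z\|)\).

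None of the three steps is a genuine obstacle. The only point needing care is the extended-real arithmetic in the first step. Because \(\Omega\ne\varnothing\) and \(f,g\) are real-valued, \(\Phi_\lambda(x,0_Z)\) is finite for some \(x\), so no expression of the form \(+\infty-\infty\) arises when rearranging. If \(\Phi_\lambda^{\sharp_Z}(\alpha,0_Z)=+\infty\) for some \(\alpha\), the corresponding dual term is \(-\infty\), and the inequality holds trivially for that \(\alpha\).
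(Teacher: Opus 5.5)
Your proof is correct and follows the paper's argument: the first inequality comes from Theorem~\ref{thm:weak_norm_cone_duality} applied to \(\Phi_\lambda\), the equality from the definition of \(v_{\Phi_\lambda}(0_Z)\), and the last inequality from evaluating on \(\mathcal F\), where \(d_{-Z_+}(g(x))=0\). Your remarks on extended-real arithmetic, and on needing neither properness nor the consistency condition, are accurate details that the paper leaves implicit.
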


\begin{proof}
The first inequality follows from
Theorem~\ref{thm:weak_norm_cone_duality}
applied to \(\Phi_\lambda\), together with
\eqref{eq:value_function_conic_Phi_lambda}. The equality is exactly
\eqref{eq:value_function_conic_Phi_lambda}. The last inequality follows by
evaluating the penalized objective on \(\mathcal F\), where
\(d_{-Z_+}(g(x))=0\).
\end{proof}

\subsection{Strong Norm-cone Duality from Exact Distance Penalization}
\label{subsec:strong_norm_cone_duality_exact_distance_penalization}

The distance function \(d_{-Z_+}\) is \(1\)-Lipschitz. Consequently, every
penalized perturbation \(\Phi_\lambda\) satisfies the uniform lower Lipschitz
bound required in Theorem~\ref{thm:strong_norm_cone_duality_ULL}. Therefore,
strong norm-cone duality for the constrained problem follows once the
distance penalty is exact.

\begin{theorem}
\label{thm:strong_duality_conic}
Assume that \(\mathcal F\neq\varnothing\), \(\inf_{x\in\mathcal F}f(x)\in\mathbb R\),
and that \eqref{eq:P0_conic} admits an exact distance penalization with
parameter \(\bar\lambda\ge0\). Then, for every \(\lambda\ge\bar\lambda\),
\[
\sup D_\lambda^{\sharp_Z}
=
\inf_{x\in\mathcal F} f(x).
\]
\end{theorem}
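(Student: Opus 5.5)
The plan is to apply Theorem~\ref{thm:strong_norm_cone_duality_ULL} to the penalized perturbation \(\Phi_\lambda\), and then combine it with exactness and the monotonicity remark following Lemma~\ref{lem:distance_representation_penalized_value}. Fix \(\lambda\ge\bar\lambda\). The first step is to note that \(\lambda\) is itself an exact distance penalty parameter. Indeed, \(f+\lambda d_{-Z_+}\circ g\ge f+\bar\lambda d_{-Z_+}\circ g\) on \(\Omega\). Combined with the last inequality of Theorem~\ref{thm:weak_norm_cone_duality_distance}, this gives
\[
\inf_{x\in\mathcal F}f(x)=\inf_{x\in\Omega}\bigl(f(x)+\bar\lambda d_{-Z_+}(g(x))\bigr)\le v_{\Phi_\lambda}(0_Z)\le\inf_{x\in\mathcal F}f(x).
\]
Hence \(v_{\Phi_\lambda}(0_Z)=\inf_{x\in\mathcal F}f(x)\in\R\).

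The second step is to verify that \(\Phi_\lambda\) is uniformly lower Lipschitz at \(0_Z\) with constant \(\lambda\), in the sense of Definition~\ref{def:ULL}. For \(x\notin\Omega\), both sides of \eqref{eq:ULL} equal \(+\infty\) (we have \(+\infty\ge+\infty\)), so there is nothing to check. For \(x\in\Omega\), I use that \(d_{-Z_+}\) is \(1\)-Lipschitz, which gives
\[
d_{-Z_+}(g(x)-z)\ge d_{-Z_+}(g(x))-\|z\|.
\]
Multiplying by \(\lambda\) and adding \(f(x)\in\R\) yields \(\Phi_\lambda(x,z)\ge\Phi_\lambda(x,0_Z)-\lambda\|z\|\) for all \((x,z)\).

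The third step invokes Theorem~\ref{thm:strong_norm_cone_duality_ULL} for \(\Phi_\lambda\), with primal objective \(x\mapsto\Phi_\lambda(x,0_Z)\) on \(\Omega\). Its hypothesis \(v_{\Phi_\lambda}(0_Z)\in\R\) was established in the first step. The theorem gives \(\sup D_\lambda^{\sharp_Z}=v_{\Phi_\lambda}(0_Z)\), and by the first step this equals \(\inf_{x\in\mathcal F}f(x)\).

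The only delicate point is bookkeeping rather than a genuine obstacle. Theorem~\ref{thm:strong_norm_cone_duality_ULL} was stated for the abstract primal \(\inf_\Omega f\), where \(\Phi(\cdot,0_Z)\) is consistent with \(f\) via \eqref{eq:perturbation_consistency}. For \(\Phi_\lambda\), the role of \(f\) in that theorem is played by the penalized objective \(f+\lambda d_{-Z_+}\circ g\). So the theorem's conclusion is really \(\sup D_\lambda^{\sharp_Z}=v_{\Phi_\lambda}(0_Z)\), and exactness from the first step is what converts this into the stated equality with \(\inf_{\mathcal F}f\). Properness of \(v_{\Phi_\lambda}\) is supplied inside the proof of Theorem~\ref{thm:strong_norm_cone_duality_ULL} from the support inequality, so it needs no separate check.
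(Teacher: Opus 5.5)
Your proof is correct and follows essentially the same route as the paper. Both verify the uniform lower Lipschitz bound for \(\Phi_\lambda\) from the \(1\)-Lipschitz property of \(d_{-Z_+}\), invoke Theorem~\ref{thm:strong_norm_cone_duality_ULL} to get \(\sup D_\lambda^{\sharp_Z}=v_{\Phi_\lambda}(0_Z)\), and conclude by monotonicity of exactness in the penalty parameter; your ordering is slightly more careful, since you establish that theorem's hypothesis \(v_{\Phi_\lambda}(0_Z)\in\R\) before invoking it, whereas the paper's proof leaves this implicit.
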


\begin{proof}
Since \(d_{-Z_+}\) is \(1\)-Lipschitz, for every \(x\in\Omega\) and every
\(z\in Z\),
\[
d_{-Z_+}(g(x)-z)
\ge
d_{-Z_+}(g(x))-\|z\|.
\]
Thus
\[
f(x)+\lambda d_{-Z_+}(g(x)-z)
\ge
f(x)+\lambda d_{-Z_+}(g(x))-\lambda \|z\|,
\]
or equivalently
\[
\Phi_\lambda(x,z)
\ge
\Phi_\lambda(x,0_Z)-\lambda\|z\|,
\qquad \forall (x,z)\in X\times Z,
\]
where the assertion is trivial for \(x\notin\Omega\). Hence \(\Phi_\lambda\)
satisfies \eqref{eq:ULL} with constant \(\lambda\).

By Theorem~\ref{thm:strong_norm_cone_duality_ULL},
\[
\sup D_\lambda^{\sharp_Z}
=
v_{\Phi_\lambda}(0_Z)
=
\inf_{x\in\Omega}
\bigl(f(x)+\lambda d_{-Z_+}(g(x))\bigr).
\]
Since exactness is monotone with respect to the penalty parameter,
\(\lambda\ge\bar\lambda\) gives
\[
\inf_{x\in\Omega}
\bigl(f(x)+\lambda d_{-Z_+}(g(x))\bigr)
=
\inf_{x\in\mathcal F} f(x).
\]
The result follows.
\end{proof}

\subsection{Exact Distance Penalization: Characterization and Sufficient Conditions}
\label{subsec:sufficient_conditions_exact_distance_penalization}
It remains to identify conditions ensuring exact distance penalization.
We first show that exactness admits a precise characterization in terms
of norm-cone supports of the value function associated with the direct
constraint perturbation. In particular, the exact penalty parameters
are identified with the norm-cone subdifferential of the value function
at the nominal perturbation. We then derive a verifiable sufficient
condition from global error bounds.

\subsubsection{Characterization of Exact Distance Penalization via Norm-cone Support}
\label{subsubsec:exact_penalization_value_function_support}
Lemma~\ref{lem:distance_representation_penalized_value} relates the constraint
perturbation to distance penalization. It shows that exactness is equivalent
to the existence of a norm-cone support of the value function at the nominal
perturbation.

\begin{theorem}[Characterization of exact distance penalization]
\label{thm:exact_penalty_value_function_characterization}
Assume that
\[
v_\Phi(0_Z)=\inf_{x\in\mathcal F}f(x)\in\R,
\]
and let \(\lambda\ge0\). The following assertions are equivalent:
\begin{enumerate}
\item \(\lambda\) is an exact distance penalty parameter, that is,
\[
\inf_{x\in\Omega}
\bigl(
f(x)+\lambda d_{-Z_+}(g(x))
\bigr)
=
\inf_{x\in\mathcal F}f(x);
\]

\item
\[
v_\Phi(z)\ge v_\Phi(0_Z)-\lambda\|z\|,
\qquad z\in Z;
\]

\item \(v_\Phi\) is proper and
\[
\lambda\in\partial^\sharp v_\Phi(0_Z);
\]

\item \(v_\Phi\) is proper and
\[
v_\Phi^\sharp(\lambda,0_Z)=-v_\Phi(0_Z).
\]
\end{enumerate}

Consequently,
\[
\bigl\{
\lambda\ge0:
\lambda \text{ is an exact distance penalty parameter}
\bigr\}
=
\partial^\sharp v_\Phi(0_Z).
\]
In particular, problem~\eqref{eq:P0_conic} admits an exact distance
penalization if and only if
\[
\partial^\sharp v_\Phi(0_Z)\neq\varnothing.
\]
Whenever these equivalent conditions hold,
\[
\partial^\sharp v_\Phi(0_Z)
=
[\lambda^*,+\infty),
\qquad
\lambda^*
=
|\partial^\sharp v_\Phi|(0_Z),
\]
and \(\lambda^*\) is the least exact distance penalty parameter.
\end{theorem}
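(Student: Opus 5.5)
The plan is to run the chain $1)\Leftrightarrow 2)\Leftrightarrow 3)\Leftrightarrow 4)$, leaning almost entirely on Lemma~\ref{lem:distance_representation_penalized_value} and on the definitions of $\partial^\sharp$ and $v_\Phi^\sharp$. The equivalence $1)\Leftrightarrow 2)$ is exactly the second assertion of Lemma~\ref{lem:distance_representation_penalized_value}, which applies because $v_\Phi(0_Z)\in\R$ is assumed.

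For $2)\Leftrightarrow 3)$, first check properness. Assume 2). Then $v_\Phi(z)>-\infty$ for every $z$, because the right-hand side of the inequality is real. Also $0_Z\in\dom v_\Phi$, so $v_\Phi$ is proper. Since $v_\Phi(0_Z)\in\R$, Definition~\ref{def:metric_subdifferential} (with $\phi_\lambda(z-0_Z)=-\lambda\|z\|$) shows that 2) is literally the statement $\lambda\in\partial^\sharp v_\Phi(0_Z)$. The converse is the same identity read backwards.

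For $3)\Leftrightarrow 4)$, write
\[
v_\Phi^\sharp(\lambda,0_Z)=\sup_{z\in Z}\bigl(-\lambda\|z\|-v_\Phi(z)\bigr)\ge -v_\Phi(0_Z),
\]
where the inequality comes from evaluating at $z=0_Z$. Equality holds if and only if $-\lambda\|z\|-v_\Phi(z)\le -v_\Phi(0_Z)$ for all $z$, and that is exactly 2). The one point to handle is that $v_\Phi$ must be proper for $v_\Phi^\sharp$ to be defined as in Definition~\ref{def:metric_conjugates}. This is guaranteed by the hypothesis in 4) itself, or by the properness already obtained from 2). No $+\infty-\infty$ expressions arise, since $v_\Phi(0_Z)$ is finite.

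The consequences then follow directly. The equivalence $1)\Leftrightarrow 3)$ identifies the set of exact penalty parameters with $\partial^\sharp v_\Phi(0_Z)$, so existence of an exact penalization is the same as this set being nonempty. When it is nonempty, $0_Z\in\dom v_\Phi$ and $v_\Phi$ is proper. Proposition~\ref{prop:left_closed_subdiff} then gives $\partial^\sharp v_\Phi(0_Z)=[\alpha_0,+\infty)$ with $\alpha_0=\inf\partial^\sharp v_\Phi(0_Z)=|\partial^\sharp v_\Phi|(0_Z)$ by Definition~\ref{def:minimal_norm-cone_support_slope}, and this minimum is attained, so $\lambda^*$ is the least exact parameter. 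I expect no real obstacle. The only care needed is the properness bookkeeping, since Proposition~\ref{prop:left_closed_subdiff} and the conjugate presuppose a proper function and $v_\Phi$ could a priori take the value $-\infty$; this is excluded exactly by the support inequality 2).
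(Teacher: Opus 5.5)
Your proposal is correct and follows essentially the same route as the paper: $1)\Leftrightarrow 2)$ from Lemma~\ref{lem:distance_representation_penalized_value}, $2)\Leftrightarrow 3)$ by reading off the definition of $\partial^\sharp$ once properness is obtained from the support inequality, and the equivalence with $4)$ through the conjugate at $0_Z$, whose supremum is at least $-v_\Phi(0_Z)$ by evaluating at $z=0_Z$. The interval structure and the least exact parameter then come from Proposition~\ref{prop:left_closed_subdiff}, exactly as in the paper. One small labelling point: the step you call $3)\Leftrightarrow 4)$ actually proves $2)\Leftrightarrow 4)$, which is also how the paper closes the chain.
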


\begin{proof}
The equivalence between \emph{(1)} and \emph{(2)} follows directly
from Lemma~\ref{lem:distance_representation_penalized_value}.

Condition \emph{(2)} implies that \(v_\Phi\) never takes the value
\(-\infty\), and since \(v_\Phi(0_Z)\in\mathbb R\), the function
\(v_\Phi\) is proper. Moreover,

\[
v_\Phi(z)
\geq
v_\Phi(0_Z)-\lambda\|z-0_Z\|,
\qquad z\in Z,
\]

which is precisely

\[
\lambda\in\partial^\sharp v_\Phi(0_Z).
\]

Thus \emph{(2)} and \emph{(3)} are equivalent.

Finally,

\[
v_\Phi^\sharp(\lambda,0_Z)
=
\sup_{z\in Z}
\bigl(-\lambda\|z\|-v_\Phi(z)\bigr).
\]

Condition \emph{(2)} is equivalent to

\[
-\lambda\|z\|-v_\Phi(z)
\leq
-v_\Phi(0_Z),
\qquad z\in Z.
\]

Since equality is attained at \(z=0_Z\), this is equivalent to

\[
v_\Phi^\sharp(\lambda,0_Z)
=
-v_\Phi(0_Z),
\]

and hence \emph{(2)} and \emph{(4)} are equivalent.

Since the equivalence between \emph{(1)} and \emph{(3)} holds for every
\(\lambda\ge0\), we obtain
\[
\bigl\{
\lambda\ge0:
\lambda \text{ is an exact distance penalty parameter}
\bigr\}
=
\partial^\sharp v_\Phi(0_Z).
\]
Therefore, problem~\eqref{eq:P0_conic} admits an exact distance penalization
if and only if
\[
\partial^\sharp v_\Phi(0_Z)\neq\varnothing.
\]
If this set is nonempty, Proposition~\ref{prop:left_closed_subdiff} gives
\[
\partial^\sharp v_\Phi(0_Z)
=
[|\partial^\sharp v_\Phi|(0_Z),+\infty).
\]
Hence
\[
\lambda^*
:=
|\partial^\sharp v_\Phi|(0_Z)
\]
is the least exact distance penalty parameter.
\end{proof}

\begin{corollary}
\label{cor:exact_penalty_pointwise_biconjugation}
Assume that \(v_\Phi(0_Z)\in\mathbb R\). If the problem admits an exact
distance penalization, then

\[
v_\Phi^{\sharp\sharp}(0_Z)=v_\Phi(0_Z).
\]

Consequently, the direct norm-cone dual problem satisfies

\[
\sup D^{\sharp_Z}
=
\inf_{x\in\mathcal F}f(x).
\]

\end{corollary}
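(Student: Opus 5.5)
The plan is to chain Theorem~\ref{thm:exact_penalty_value_function_characterization} with the pointwise exactness result for norm-cone supports and then read off strong duality from Corollary~\ref{cor:strong_duality_pointwise_biconjugation}.

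\emph{Step 1: obtain a norm-cone support.} Let \(\bar\lambda\) be an exact distance penalty parameter. Since \(v_\Phi(0_Z)=\inf_{x\in\mathcal F}f(x)\in\mathbb R\), Theorem~\ref{thm:exact_penalty_value_function_characterization} applies. Its implications \emph{(1)}\(\Rightarrow\)\emph{(2)}\(\Rightarrow\)\emph{(3)} give two facts. First, \(v_\Phi\) is proper. Second,
\[
v_\Phi(z)\ge v_\Phi(0_Z)-\bar\lambda\|z\|,\qquad z\in Z,
\]
that is, \(\bar\lambda\in\partial^\sharp v_\Phi(0_Z)\).

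\emph{Step 2: place \(v_\Phi\) in \(\Fnc(Z)\).} The same estimate shows that \(\bar\lambda\in\slp(v_\Phi)\), because
\[
\sup_z\bigl(-\bar\lambda\|z\|-v_\Phi(z)\bigr)\le -v_\Phi(0_Z)<+\infty.
\]
Hence \(v_\Phi\in\Fnc(Z)\). Moreover, \(z\mapsto v_\Phi(0_Z)-\bar\lambda\|z\|\) is a norm-cone minorant of \(v_\Phi\) that touches it at \(0_Z\in\dom v_\Phi\). So it is a norm-cone support at \(0_Z\) in the sense of Definition~\ref{def:admissible_heights}.

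\emph{Step 3: conclude.} Corollary~\ref{cor:pointwise_exactness_norm_cone_support} now yields \(v_\Phi^{\sharp\sharp}(0_Z)=v_\Phi(0_Z)\). Since \(v_\Phi\) is proper and \(v_\Phi(0_Z)\in\mathbb R\), Corollary~\ref{cor:strong_duality_pointwise_biconjugation} applies. Equivalently, Proposition~\ref{prop:dual_value_biconjugate_value_function} gives \(\sup D^{\sharp_Z}=v_\Phi^{\sharp\sharp}(0_Z)=v_\Phi(0_Z)=\inf_{x\in\mathcal F}f(x)\).

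There is no real obstacle in this argument. The only point needing care is the properness of \(v_\Phi\), and membership in \(\Fnc(Z)\), which the cited corollaries require as hypotheses. Both come for free from the lower norm-cone bound in Step 1, so no extra assumption on \(f\) or \(g\) is needed.
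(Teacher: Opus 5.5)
Your proposal is correct and follows essentially the same route as the paper. Theorem~\ref{thm:exact_penalty_value_function_characterization} gives properness of \(v_\Phi\) and \(\bar\lambda\in\partial^\sharp v_\Phi(0_Z)\), hence \(v_\Phi\in\Fnc(Z)\) with a norm-cone support at \(0_Z\); Corollary~\ref{cor:pointwise_exactness_norm_cone_support} and Proposition~\ref{prop:dual_value_biconjugate_value_function} then finish the argument, and your explicit check that \(\bar\lambda\in\slp(v_\Phi)\) just spells out what the paper covers with ``in particular''.
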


\begin{proof}
Let \(\lambda\) be an exact distance penalty parameter. By
Theorem~\ref{thm:exact_penalty_value_function_characterization},
\(v_\Phi\) is proper and
\(\lambda\in\partial^\sharp v_\Phi(0_Z)\). In particular,
\(v_\Phi\in\mathcal F_{\rm nc}(Z)\) and \(v_\Phi\) admits a
norm-cone support at \(0_Z\). Hence Corollary~\ref{cor:pointwise_exactness_norm_cone_support} gives

\[
v_\Phi^{\sharp\sharp}(0_Z)=v_\Phi(0_Z).
\]

Using Proposition~\ref{prop:dual_value_biconjugate_value_function},

\[
\sup D^{\sharp_Z}
=
v_\Phi^{\sharp\sharp}(0_Z)
=
v_\Phi(0_Z)
=
\inf_{x\in\mathcal F}f(x).
\]

\end{proof}
\begin{remark}
\label{rem:direct_vs_penalized_duality}
The direct and penalized perturbation schemes provide two different
routes to strong norm-cone duality. A uniform lower Lipschitz estimate
for the original perturbation \(\Phi\) is a sufficient condition for
strong duality through Theorem~\ref{thm:strong_norm_cone_duality_ULL}.
However, it is not necessary. Indeed, as shown in
Theorem~\ref{thm:exact_penalty_value_function_characterization}, exact
distance penalization is equivalent to the existence of a norm-cone
support of the direct value function \(v_\Phi\) at \(0_Z\), and hence
implies

\[
v_\Phi^{\sharp\sharp}(0_Z)=v_\Phi(0_Z).
\]

Therefore, if \(\lambda\) is an exact distance penalty parameter,
the direct dual problem and the penalized dual problem
\(D_\lambda^{\sharp_Z}\) both have the primal optimal value, even if
the original perturbation \(\Phi\) fails the uniform lower Lipschitz
condition.
\end{remark}

\subsubsection{Exact Penalization from Global Error Bounds}
\label{subsubsec:exact_penalization_global_error_bounds}

A second mechanism is a global error bound for the constraint system. It
estimates the distance to the feasible set by the constraint residual.

Define
\[
G(x):=g(x)+Z_+,
\qquad x\in\Omega.
\]
Then \(G^{-1}(0_Z)=\mathcal F\), and, for every \(x\in\Omega\),
\[
\operatorname{dist}(0_Z,G(x))
=
\inf_{v\in Z_+}\|g(x)+v\|
=
d_{-Z_+}(g(x)).
\]
\begin{definition}
\label{def:global_error_bound}
We say that the constraint system admits a global error bound on \(\Omega\) if
there exists \(\kappa\ge0\) such that
\[
\operatorname{dist}(x,\mathcal F)
\le
\kappa\,d_{-Z_+}(g(x)),
\qquad \forall x\in\Omega.
\]
\end{definition}

\begin{theorem}
\label{thm:exact_penalty_error_bound}
Assume that \(\mathcal F\neq\varnothing\), \(\inf_{x\in\mathcal F}f(x)\in\mathbb R\),
and that \(f\) is Lipschitz on \(\Omega\). If the global error bound holds with
constant \(\kappa\), then \eqref{eq:exact_penalty_conic} holds for every
\[
\lambda\ge \operatorname{Lip}(f)\kappa.
\]
\end{theorem}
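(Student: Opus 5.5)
The plan is the classical Clarke-type exact penalization argument: project an infeasible point approximately onto \(\mathcal F\), compare objective values using the Lipschitz continuity of \(f\), and absorb the projection distance through the error bound. Set \(L:=\operatorname{Lip}(f)\) and \(\mu:=\inf_{x\in\mathcal F}f(x)\in\R\). By the monotonicity of exactness in the penalty parameter noted after Lemma~\ref{lem:distance_representation_penalized_value}, it suffices to treat \(\lambda\ge L\kappa\) directly. The inequality \(\inf_{x\in\Omega}(f(x)+\lambda d_{-Z_+}(g(x)))\le\mu\) is already contained in Theorem~\ref{thm:weak_norm_cone_duality_distance}, since the penalty term vanishes on \(\mathcal F\). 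So the whole task is the reverse inequality
\[
f(x)+\lambda d_{-Z_+}(g(x))\ge\mu,\qquad x\in\Omega .
\]

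Fix \(x\in\Omega\) and \(\varepsilon>0\). Since \(\mathcal F\neq\varnothing\), the distance \(\operatorname{dist}(x,\mathcal F)\) is finite, so I choose \(y\in\mathcal F\) with \(\|x-y\|\le\operatorname{dist}(x,\mathcal F)+\varepsilon\). Because \(\mathcal F\subset\Omega\) and \(f\) is \(L\)-Lipschitz on \(\Omega\), I will use that \(f\) is \(L'\)-Lipschitz for every \(L'>L\) (or for \(L\) itself, by passing to the infimum in the definition of \(\operatorname{Lip}\)). This gives
\[
f(x)\ge f(y)-L\|x-y\|\ge \mu-L\operatorname{dist}(x,\mathcal F)-L\varepsilon .
\]
The global error bound then yields \(\operatorname{dist}(x,\mathcal F)\le\kappa\, d_{-Z_+}(g(x))\), so
\[
f(x)\ge\mu-L\kappa\, d_{-Z_+}(g(x))-L\varepsilon\ge \mu-\lambda d_{-Z_+}(g(x))-L\varepsilon .
\]
Letting \(\varepsilon\downarrow0\) gives the required inequality, and taking the infimum over \(x\in\Omega\) establishes~\eqref{eq:exact_penalty_conic}.

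The only delicate points are minor. First, one must confirm that \(\operatorname{Lip}(f)\) is itself a valid Lipschitz constant: the defining set of constants is closed, so the infimum is attained. Second, the projection onto \(\mathcal F\) need not exist, which is why the \(\varepsilon\)-approximation is used. Neither step is a real obstacle. As a remark, the conclusion can also be restated through Theorem~\ref{thm:exact_penalty_value_function_characterization} as \(L\kappa\in\partial^\sharp v_\Phi(0_Z)\), so that \(|\partial^\sharp v_\Phi|(0_Z)\le\operatorname{Lip}(f)\kappa\).
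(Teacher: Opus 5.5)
Your proof is correct and follows the paper's argument essentially step for step. It takes an $\varepsilon$-approximate nearest point in $\mathcal F$, applies the Lipschitz estimate of $f$ on $\Omega$ and then the global error bound, and lets $\varepsilon\downarrow0$; the easy inequality comes from the penalty vanishing on $\mathcal F$. Your explicit check that $\operatorname{Lip}(f)$ is itself an admissible Lipschitz constant is a detail the paper uses silently, and the appeal to monotonicity in $\lambda$ is unnecessary because you treat every $\lambda\ge\operatorname{Lip}(f)\kappa$ directly.
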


\begin{proof}
Set \(p:=\inf_{x\in\mathcal F}f(x)\). Since \(d_{-Z_+}(g(x))=0\) on
\(\mathcal F\),
\[
\inf_{x\in\Omega}
\bigl(f(x)+\lambda d_{-Z_+}(g(x))\bigr)
\le p.
\]

Conversely, fix \(x\in\Omega\) and \(\varepsilon>0\). Choose
\(x_\varepsilon\in\mathcal F\) such that
\[
\|x-x_\varepsilon\|
\le
\operatorname{dist}(x,\mathcal F)+\varepsilon.
\]
By Lipschitz continuity of \(f\),
\[
p
\le
f(x_\varepsilon)
\le
f(x)+\operatorname{Lip}(f)\|x-x_\varepsilon\|
\le
f(x)+\operatorname{Lip}(f)\operatorname{dist}(x,\mathcal F)
+\operatorname{Lip}(f)\varepsilon.
\]
Using the global error bound,
\[
p
\le
f(x)+\operatorname{Lip}(f)\kappa\,d_{-Z_+}(g(x))
+\operatorname{Lip}(f)\varepsilon.
\]
If \(\lambda\ge\operatorname{Lip}(f)\kappa\), then
\[
p
\le
f(x)+\lambda d_{-Z_+}(g(x))+\operatorname{Lip}(f)\varepsilon.
\]
Letting \(\varepsilon\downarrow0\) and then taking the infimum over
\(x\in\Omega\), we get
\[
p
\le
\inf_{x\in\Omega}
\bigl(f(x)+\lambda d_{-Z_+}(g(x))\bigr).
\]
Together with the opposite inequality, this proves the result.
\end{proof}

Since \(G^{-1}(0_Z)=\mathcal F\) and
\[
\operatorname{dist}(0_Z,G(x))
=
d_{-Z_+}(g(x)),
\]
the global error bound is equivalently written as
\[
\operatorname{dist}(x,G^{-1}(0_Z))
\le
\kappa\,\operatorname{dist}(0_Z,G(x)),
\qquad x\in\Omega.
\]
Thus, in the present setting, the global error bound is precisely global
metric subregularity of \(G\) at \(0_Z\) on \(\Omega\). Consequently,
Theorem~\ref{thm:exact_penalty_error_bound} may equivalently be formulated
under global metric subregularity.

The preceding results characterize exact distance penalization through norm-cone support of the natural value function and provide a verifiable sufficient condition based on global error bounds.

\subsubsection{An Infinite-Dimensional Nonconvex Conic Example}
The following example illustrates that exact distance penalization and strong norm-cone duality may hold in an infinite-dimensional conic problem with a nonconvex objective and a nonsolid ordering cone.
\begin{example}
\label{ex:conic_l2_nonconvex}
Let
\[
X=Z=\ell^2,
\qquad
Z_+:=\ell^2_+:=\{z=(z_n)_{n\ge1}\in\ell^2:\ z_n\ge0\ \forall n\}.
\]
Then \(Z_+\) is a nonempty closed convex cone. It is well known that
\(
\operatorname{int}Z_+=\varnothing,
\)
and hence \(Z_+\) is not solid.

Let \(a:=(2^{-n})_{n\ge1}\in\ell^2\). Then \(\|a\|^2=1/3\). Define
\[
\Omega:=\{x=(x_n)_{n\ge1}\in\ell^2:\ 0\le x_n\le 2a_n\ \forall n\},
\]
and
\[
f(x):=4\langle a,x\rangle-\|x\|^2,
\qquad
g(x):=a-x.
\]
The function \(f\) is not convex, since
\[
f(a)=1>
\frac{f(0)+f(2a)}{2}
=
\frac23.
\]

The conic constraint \(g(x)\in -Z_+\) is equivalent to \(x_n\ge a_n\) for all
\(n\). Hence
\[
\mathcal F
=
\{x\in\ell^2:\ a_n\le x_n\le 2a_n\ \forall n\ge1\}.
\]

\medskip
\noindent\textbf{(i) The primal problem.}
For every \(x\in\mathcal F\),
\[
f(x)-f(a)
=
\langle x-a,3a-x\rangle
=
\sum_{n\ge1}(x_n-a_n)(3a_n-x_n).
\]
Since \(a_n\le x_n\le2a_n\), every term is nonnegative. If \(x\neq a\),
then \(x_n>a_n\) for some \(n\), and the corresponding term is strictly
positive. Hence \(f(x)\ge f(a)\), with equality if and only if \(x=a\).
Therefore \(a\) is the unique minimizer on \(\mathcal F\), and
\[
\inf_{x\in\mathcal F}f(x)=f(a)=3\|a\|^2=1.
\]

\medskip
\noindent\textbf{(ii) Distance to the cone and global error bound.}
For \(u\in\ell^2\),
\[
d_{-\ell^2_+}(u)=\|u_+\|,
\qquad
u_+:=((u_n)_+)_{n\ge1}.
\]
Hence, for every \(x\in\Omega\),
\[
d_{-Z_+}(g(x))
=
\|(a-x)_+\|.
\]
Since \(\ell^2\) is a Hilbert space and \(\mathcal F\) is nonempty, closed, and
convex, the metric projection onto \(\mathcal F\) is well defined and
single-valued. A direct coordinatewise computation yields
\[
P_{\mathcal F}(x)=x+(a-x)_+,
\qquad x\in\Omega.
\]
Therefore,
\[
\operatorname{dist}(x,\mathcal F)
=
\|(a-x)_+\|
=
d_{-Z_+}(g(x)),
\qquad \forall x\in\Omega.
\]
Thus the global error bound holds with \(\kappa=1\). Equivalently,
\(G(x):=g(x)+Z_+\) is globally metrically subregular at \(0_Z\) on \(\Omega\)
with constant \(1\).

\medskip
\noindent\textbf{(iii) Failure of the direct uniform lower Lipschitz condition.}
The direct constraint perturbation is
\[
\Phi(x,z)
:=
\begin{cases}
f(x), & x\in\Omega,\ a-x\in z-Z_+,\\
+\infty, & \text{otherwise}.
\end{cases}
\]
It does not satisfy the uniform lower Lipschitz condition. Indeed, \(0\in\Omega\) and
\[
g(0)=a\notin -Z_+,
\]
so \(\Phi(0,0_Z)=+\infty\). On the other hand, since \(g(0)=a\in a-Z_+\),
\[
\Phi(0,a)=f(0)=0.
\]
Thus no \(\alpha\ge0\) can satisfy
\[
\Phi(x,z)\ge \Phi(x,0_Z)-\alpha\|z\|,
\qquad \forall (x,z)\in X\times Z.
\]

\medskip
\noindent\textbf{(iv) Exact distance penalization.}
Fix \(x\in\Omega\), and set
\[
h:=(a-x)_+,
\qquad
y:=x+h.
\]
Then \(y\in\mathcal F\) and
\[
\|h\|=d_{-Z_+}(g(x)).
\]
Since \(y=x+h\),
\[
f(y)-f(x)
=
4\langle a,h\rangle-2\langle x,h\rangle-\|h\|^2.
\]
On the set of indices where \(h_n>0\), one has \(h_n=a_n-x_n\). Hence
\[
f(y)-f(x)
=
\sum_{x_n<a_n}(a_n-x_n)(3a_n-x_n).
\]
Since \(0\le x_n<a_n\) on this index set,
\[
0\le f(y)-f(x)
\le
3\sum_{n\ge1}a_nh_n
\le
3\|a\|\,\|h\|
=
\sqrt3\,\|h\|.
\]
Since \(y\in\mathcal F\), part \textup{(i)} yields \(f(a)\le f(y)\). Therefore,
\[
f(a)
\le
f(x)+\sqrt3\,d_{-Z_+}(g(x)),
\qquad
\forall x\in\Omega.
\]
Thus, for every \(\lambda\ge\sqrt3\),
\[
\inf_{x\in\Omega}
\bigl(f(x)+\lambda d_{-Z_+}(g(x))\bigr)
\ge
f(a).
\]
Conversely, since \(a\in\mathcal F\),
\[
d_{-Z_+}(g(a))=0,
\]
and hence
\[
\inf_{x\in\Omega}
\bigl(f(x)+\lambda d_{-Z_+}(g(x))\bigr)
\le
f(a).
\]
Therefore,
\[
\inf_{x\in\Omega}
\bigl(f(x)+\lambda d_{-Z_+}(g(x))\bigr)
=
f(a)
=
1,
\qquad
\forall\lambda\ge\sqrt3.
\]
Moreover, \(\sqrt3\) is the least exact penalty parameter. Indeed, evaluating
the penalized objective at \(x=0\) gives
\[
f(0)+\lambda d_{-Z_+}(g(0))
=
\lambda\|a\|
=
\frac{\lambda}{\sqrt3}.
\]
For exactness, this value must be at least \(f(a)=1\), which requires
\[
\lambda\ge\sqrt3.
\]
Consequently,
\[
\lambda_*=\sqrt3.
\]

\medskip
\noindent\textbf{(v) Comparison with the error-bound estimate.}
For \(x,y\in\Omega\),
\[
f(x)-f(y)
=
\langle 4a-x-y,x-y\rangle.
\]
Since \(0\le x_n,y_n\le2a_n\), one has
\(\|4a-x-y\|\le4\|a\|\), and hence
\[
|f(x)-f(y)|
\le
4\|a\|\,\|x-y\|.
\]
Moreover, for \(t\in(0,2]\),
\[
\frac{|f(ta)-f(0)|}{\|ta\|}
=
(4-t)\|a\|
\longrightarrow
4\|a\|
=
\frac4{\sqrt3}
\qquad (t\downarrow0).
\]
Thus
\[
\operatorname{Lip}(f)=\frac4{\sqrt3}.
\]
Since the global error bound holds with \(\kappa=1\),
Theorem~\ref{thm:exact_penalty_error_bound} guarantees exact penalization for
\(\lambda\ge4/\sqrt3\), whereas part~(iv) gives the sharp threshold
\(\lambda_*=\sqrt3\).

\medskip
\noindent\textbf{(vi) Strong norm-cone duality.}
For every \(\lambda\ge\sqrt3\), part~(iv) and
Theorem~\ref{thm:strong_duality_conic} yield
\[
\sup D_\lambda^{\sharp_Z}
=
\inf_{x\in\mathcal F}f(x)
=
1.
\]
Moreover, Corollary~\ref{cor:exact_penalty_pointwise_biconjugation} gives
\[
\sup D^{\sharp_Z}=1.
\]
Thus both the penalized and the direct norm-cone dual problems have the primal
optimal value, although the direct perturbation fails the uniform lower
Lipschitz condition by part~(iii).

This example shows that the norm-cone duality framework applies in infinite
dimension without convexity of the objective function and without solidity of
the ordering cone. The conclusion is driven by the metric structure of the
constraint, encoded by the identity
\[
\operatorname{dist}(x,\mathcal F)=d_{-Z_+}(g(x)).
\]
\end{example}

\section{Conclusions and Future Research}
\label{sec:conclusions_future_research}

We have developed a norm-cone conjugation framework for nonconvex
optimization generated by the metric elementary family
\[
x\longmapsto r-\alpha\|x-x_0\|,
\qquad
r\in\mathbb R,\quad \alpha\geq0,\quad x_0\in X.
\]
Although $\mathcal{NC}^-$ is a proper subclass of $\mathcal{LC}^-$, both
families generate the same class of abstract convex functions. The gain is
therefore structural rather than representational: norm-cones provide an
explicit geometry in terms of centres and slopes. This leads to computable
conjugates, a geometric biconjugation formula, and a norm-cone
subdifferential whose minimal element records the least slope of a radial
support.

On $\mathcal F_{\rm nc}(X)$, this radial reduction preserves the weak
biconjugate, but it does not identify the first conjugates, the dual
parameterizations, or their sets of optimal solutions.

Its relevance for optimization is therefore not an equivalence of the
parameterized dual problems, but a reduction of the representation and,
crucially, of the effective dual parameterization.

For perturbation problems, partial norm-cone conjugation yields a radial dual
scheme. For each slope, the complete biconjugate expression at the nominal
perturbation is maximized by choosing the nominal centre. Hence the centre
can be eliminated without changing the optimal dual value, which is
\[
v_\Phi^{\sharp\sharp}(0_Z).
\]
Strong duality is therefore equivalent to exact norm-cone biconjugation of
the value function at the nominal perturbation.

A notable consequence of this reduction is that the effective norm-cone dual
is a scalar program whose only dual variable is the real slope
$\alpha\geq0$, even when the primal and perturbation spaces are
vector-valued, or possibly infinite-dimensional. Thus the dimensionality of
these spaces does not enlarge the effective dual parameterization, although
the evaluation of the scalar dual objective may still involve a
high-dimensional optimization problem.
This dimensional reduction is the main feature of the framework from
the viewpoint of nonconvex duality.

Uniform lower Lipschitz estimates provide a direct sufficient condition by
generating a norm-cone support at the nominal perturbation.

For nonconvex conic inequality problems, exact distance penalization admits
the precise support interpretation
\[
\{\lambda\geq0:\lambda\ \text{is an exact penalty parameter}\}
=
\partial^{\sharp}v_\Phi(0_Z),
\]
and the least exact parameter is
\[
|\partial^{\sharp}v_\Phi|(0_Z).
\]
Global error bounds, equivalently global metric subregularity of the
constraint mapping, provide verifiable sufficient conditions for exactness,
which in turn yields strong norm-cone duality for both the direct and the
distance-penalized perturbation schemes. The infinite-dimensional example
in Section~\ref{sec:distance_penalization_norm_cone_duality} shows that these conclusions remain applicable with a
nonconvex objective and a nonsolid ordering cone.

Thus the exact-penalty characterization supplies quantitative
information that is not contained in a zero-duality-gap statement alone:
it identifies the entire interval of exact penalty slopes and its sharp
threshold through the metric support geometry of the value function.

The results suggest several directions for further research.

From the viewpoint of nonconvex and global optimization, a natural
question is whether the scalar structure of the dual problem can be
exploited computationally, in particular for classes of problems in which
classical linear-support duality leads to higher-dimensional multiplier
spaces.
From the variational-analysis viewpoint, local versions of the present
support, duality, and penalization results, together with dual attainment and
stability questions, appear particularly natural.
It would also be useful to investigate conditions yielding sharp
estimates of the least exact penalty slope beyond those obtained from global
error bounds.

On the generalized-convexity side, maximal norm-cone minorants,
regular norm-cone convexity, and calculus rules for the norm-cone
subdifferential and its minimal support slope remain natural topics for
further study.

\section*{Funding}

The authors acknowledge financial support from the following projects:
PID2021-\allowbreak 122126NB-\allowbreak C32, funded by MICIU/\allowbreak AEI/\allowbreak 10.13039/\allowbreak 501100011033 and by
FEDER, ``A way of making Europe''; and PID2025-\allowbreak 168246NB-\allowbreak I00, funded by
MICIU/\allowbreak AEI/\allowbreak 10.13039/\allowbreak 501100011033 and by ERDF/EU.

\bibliographystyle{plainnat}
\bibliography{references}

\end{document}